\theoremstyle{plain} 
 \newtheorem{theorem}{\indent\sc Theorem}[section]
\newtheorem{thm}{\indent\sc Theorem}[section]
 \newtheorem{lem}[theorem]{\indent\sc Lemma}
 \newtheorem{prop}[theorem]{\indent\sc Proposition}
\theoremstyle{definition} 
\def\tr{\Delta}
\def\bea{\begin{eqnarray*}}
\def\eea{\end{eqnarray*}}
\def\be{\begin{eqnarray}}
\def\ee{\end{eqnarray}}
\def\a{\alpha}
\def\n{\nabla}
\def\d{\delta}
\def\o{\omega}
\def\ka{\kappa}
\numberwithin{equation}{section}
\begin{document}

 \title{$V$-static spaces with  positive isotropic curvature}

 \author{Gabjin Yun}
\address{Department of Mathematics\\  Myong Ji University\\
116 Myongji-ro Cheoin-gu\\ Yongin, Gyeonggi 17058, Republic of Korea. }
\email{gabjin@mju.ac.kr}

\author{Seungsu Hwang}
\address{Department of Mathematics\\ Chung-Ang University\\
84 HeukSeok-ro DongJak-gu \\ Seoul 06974, Republic of Korea.
}
\email{seungsu@cau.ac.kr}




 \maketitle

 \begin{abstract}
In this paper, we give a complete classification of critical metrics of the volume functional on a compact manifold $M$ with boundary $\partial M$ having  positive isotropic curvature.
We prove that for a  pair $(f, \kappa)$ of a nontrivial smooth function $f: M \to {\Bbb R}$ and 
a nonnegative real number $\kappa$,  if $(M, g)$ having  positive isotropic curvature satisfies
$$
Ddf - (\Delta f)g - f{\rm Ric} = \kappa g,
$$
then $(M, g)$ is isometric to a geodesic ball in ${\Bbb S}^n$ when $\kappa >0$, and 
either $M$  isometric to ${\Bbb S}^n_+$, or  the product
$I \times {\Bbb S}^{n-1}$, up to finite cover when $\kappa =0$.
 \end{abstract}
 
  \setlength{\baselineskip}{15pt}

\section{Introduction}

It is well-known that, on a compact manifold, a critical metric of 
the total scalar curvature functional restricted to the set of Riemannian metrics with fixed volume
 is Einstein.
Motivated by this as well as a result obtained in \cite{f-s-t}, 
Miao and Tam introduced in \cite{mt1} the volume functional and studied  variational properties of 
the volume functional on the space of constant scalar curvature metrics on a compact 
manifold with boundary.
In \cite{cem},  Corvino, Eichmair and Miao  considered Miao-Tam critical metrics in general context. 
In fact, they studied the modified problem of finding stationary points for the volume functional
on the space of metrics whose scalar curvature is equal to a given constant, and in this process, they
introduced notion of V-static metrics.

Let ${M}$ be an $n$-dimensional compact manifold  with smooth boundary $\partial M$. 
For a pair $(f, \kappa)$ of a smooth function $f: M\to {\Bbb R}$ and a real number $\kappa \in {\Bbb R}$, we say that $({M}, g,f,\kappa)$ is a  V-static space if $(M, g)$ satisfies the following
equation
\be 
s_g'^* f =\kappa \, g\quad  \mbox{on} \quad M. \label{cem1}
\ee 
Here, $s_g'^*$ is the $L^2$-adjoint operator of the linearized scalar curvature $s_g'$ with respect to the metric $g$.
The equation (\ref{cem1}) is called the $V$-static equation   and $f$ is  called a {\it V-static potential}.  
It is known  \cite{cem} that $g$ has constant scalar curvature, and
$f$ and $g$ are analytic in appropriate coordinates. If $f=0$ on $\partial M$ with $\kappa = 1$, 
a V-static metric reduces to a Miao-Tam critical metric. If $\kappa =0$, a V-static space becomes a vacuum static space.

Space forms and Ricci flat manifolds with $f=1$ and $\kappa=0$ are  V-static spaces. 
A scalar flat V-static space on a compact manifold is Ricci flat. A natural question is to classify V-static spaces. For example, 
an $n$-dimensional compact V-static space ($\kappa =1$) with parallel Ricci tensor and smooth boundary is isometric to a geodesic ball in a simply connected space form \cite{br1}. The same result holds when an $4$-dimensional V-static space is Bach flat \cite{bdr}. 

The main purpose of this paper is to classify  V-static spaces with positive isotropic curvature. The positive isotropic curvature (PIC in short) was first introduced by Micalleff and
 Moore \cite{mm88} in  consideration of the second variation of energy of maps from surfaces 
into $M$. We say that $(M,g)$ has PIC if and only if, for every orthonormal four-frame 
$\{e_1, e_2, e_3, e_4\}$, we have the inequality 
 $$ 
R_{1313}+R_{1414}+R_{2323}+R_{2424}-2R_{1234} >0.
$$
If $(M,g)$ has positive curvature operator, then it has PIC \cite{mm88}. Also, if the sectional
 curvature of $(M,g)$ is pointwise strictly quarter-pinced, then $(M,g)$ has PIC. 
The product metric on $S^{n-1}\times S^1$ has PIC. It is also known that the connected sum of
 manifolds with PIC  also admits a PIC metric, and a compact conformally flat $4$-manifold with
 positive scalar curvature  always has PIC. 
Note that PIC implies that $(M,g)$ has positive scalar curvature. 

It should be remarked that, when $\kappa =0$, a V-static space becomes a vacuum static space,
 and  there is a classification of closed vacuum static spaces with PIC. In fact, up to finite cover, 
a compact vacuum static space without boundary having  PIC is isometric to a sphere 
${\mathbb S}^n$, or the standard product of a circle ${\mathbb S}^1$ with 
an $(n-1)$-dimensional sphere ${\mathbb S}^{n-1}$ \cite{hy2}.

Our main result is the following.

 \begin{thm}\label{thm11} 
Let $(M, g,f, \ka)$ be a $V$-static space with $\partial M = f^{-1}(0)$ and $\kappa \ge 0$.  
Assume that $f >0$ on $M\setminus \partial M$ and  $(M, g)$ has PIC.
Then we have the following.
\begin{itemize}
\item[(1)] Assume that $\ka = 0$.
\begin{itemize}
\item[(i)] If  $\partial M$ is connected, then $M^n$ is isometric to ${\Bbb S}^n_+$.
\item[(ii)] If $\partial M$ is disconnected, then $\partial M$ has only two components, 
and, up to finite cover,  $M$ is isometric to $I \times {\Bbb S}^{n-1}$, the product of an interval
 with the standard $(n-1)$-sphere. 
\end{itemize}
\item[(2)] If $\ka >0$, then $M$ is isometric to a geodesic ball in ${\Bbb S}^n$.
\end{itemize}
\end{thm}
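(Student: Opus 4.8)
The plan is to rewrite the $V$-static equation in a form adapted to its trace and trace-free parts, and then split the argument according to the sign of $\kappa$. Taking the trace of $Ddf-(\Delta f)g-f\,{\rm Ric}=\kappa g$ shows that the scalar curvature $s$ is constant and that $\Delta f=-\frac{fs+n\kappa}{n-1}$, while the trace-free part becomes $Ddf-\frac{\Delta f}{n}g=f\,\mathring{\rm Ric}$, where $\mathring{\rm Ric}={\rm Ric}-\frac{s}{n}g$; the contracted second Bianchi identity together with $s={\rm const}$ gives that $\mathring{\rm Ric}$ is divergence free. I would next record the boundary behaviour. On $\partial M=f^{-1}(0)$ one has $f=0$, so the equation reduces to $Ddf=(\Delta f+\kappa)g$, and since $\Delta f=-\frac{n\kappa}{n-1}$ there, $Ddf=-\frac{\kappa}{n-1}g$ on $\partial M$. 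Thus when $\kappa=0$ the Hessian of $f$ vanishes on the boundary and $\partial M$ is totally geodesic, whereas when $\kappa>0$ the boundary is totally umbilic; in both cases $\nabla f\neq0$ on $\partial M$ by analyticity and $f>0$ inside, so $|\nabla f|$ is locally constant on each component of $\partial M$.

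For $\kappa=0$ I would reduce to the closed case. Because $\partial M$ is totally geodesic, the metric double $\widehat M=M\cup_{\partial M}M$ is a smooth closed Riemannian manifold, and the odd extension $\widehat f$ of $f$ solves the vacuum static equation on $\widehat M$; moreover $\widehat f^{-1}(0)=\partial M$ since $f>0$ in the interior. As PIC is a pointwise condition on the curvature operator and the curvature of $\widehat g$ along the gluing locus agrees with that of $g$, the double $(\widehat M,\widehat g)$ is a compact vacuum static space with PIC. Invoking the classification of such spaces in \cite{hy2}, $\widehat M$ is, up to finite cover, either ${\Bbb S}^n$ or the product ${\Bbb S}^1\times{\Bbb S}^{n-1}$. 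In the first case $\widehat f$ is a first eigenfunction whose zero set is a totally geodesic equator, so $M=\{\widehat f\ge0\}$ is a hemisphere ${\Bbb S}^n_+$ and $\partial M$ is connected, giving (1)(i). In the second case the potential depends only on the circle factor and vanishes at exactly two points, so $\widehat f^{-1}(0)$ consists of two copies of ${\Bbb S}^{n-1}$ and $M$ is the sub-cylinder $I\times{\Bbb S}^{n-1}$ between them; this simultaneously shows that a disconnected boundary has exactly two components and yields (1)(ii).

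The case $\kappa>0$ is the genuinely new part, and here doubling fails because the boundary is only umbilic. The heart of the argument, and the step I expect to be the main obstacle, is to prove that $(M,g)$ is Einstein, i.e. $\mathring{\rm Ric}\equiv0$. I would differentiate the trace-free equation $Ddf-\frac{\Delta f}{n}g=f\,\mathring{\rm Ric}$, commute covariant derivatives, and use the constancy of $s$ and the Bianchi identities to produce an identity expressing the Cotton tensor, equivalently $W_{ijkl}\nabla^l f$ together with $\mathring{\rm Ric}$--$\nabla f$ terms, in terms of a single three-tensor $D$ built from $\mathring{\rm Ric}$, $\nabla f$ and $W$. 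Contracting the trace-free equation with $\mathring{\rm Ric}$, multiplying by $f$ and integrating over $M$, the weight $f$ together with $f=0$ on $\partial M$ is exactly what annihilates all boundary contributions, and one is left with an integral of the form $\int_M f\,|D|^2$ plus a curvature term of the type $\int_M f\,W(\cdot,\cdot,\cdot,\cdot)$ contracted against $\mathring{\rm Ric}$ and $\nabla f$. The delicate point is to repackage this curvature term, in an orthonormal frame adapted to $\nabla f$ and to the eigenspaces of $\mathring{\rm Ric}$, into the four-frame combination $R_{1313}+R_{1414}+R_{2323}+R_{2424}-2R_{1234}$ that PIC controls, so that the whole integrand has a fixed sign. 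Since $f>0$ in the interior, PIC then forces $\mathring{\rm Ric}=0$.

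Once $M$ is Einstein its Ricci tensor is parallel, so after rescaling $f$ so that $\kappa=1$ the classification of $V$-static spaces with parallel Ricci tensor \cite{br1} applies and shows $(M,g)$ is isometric to a geodesic ball in a simply connected space form; since PIC forces positive scalar curvature \cite{mm88}, that space form must be spherical, hence ${\Bbb S}^n$, which completes (2). For completeness the same conclusion can be seen directly: the $V$-static equation collapses to the Obata-type Hessian equation $Ddf=\varphi g$ with $\varphi=-f-\frac{\kappa}{n-1}$, so $\nabla f$ is a nontrivial closed conformal field, $(M,g)$ is rotationally symmetric with $g=dr^2+h(r)^2 g_N$ and $f=f(r)$, and the Einstein condition with PIC forces $h=\sin$ and $N$ to be a round ${\Bbb S}^{n-1}$; tracking $\varphi=-f-\frac{\kappa}{n-1}$ with $\kappa>0$ then shows $f$ has a single interior maximum and a connected umbilic zero set, identifying $M$ with a geodesic ball in ${\Bbb S}^n$.
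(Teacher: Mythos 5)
Your treatment of case (1) ($\kappa=0$) is essentially the paper's own argument: for $\kappa=0$ the boundary is totally geodesic, one doubles $M$ across $\partial M$ to get a closed vacuum static space which still has PIC, and one quotes the classification of \cite{hy2} to read off ${\Bbb S}^n_+$ in the connected case and $I\times{\Bbb S}^{n-1}$ (up to finite cover, with exactly two boundary components) in the disconnected case. That part is sound and matches the paper.

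The genuine gap is in case (2), $\kappa>0$, and it sits exactly at the step you yourself flag as ``the delicate point.'' You propose to contract a Cotton--Weyl identity against $\mathring{\rm Ric}$ and $\nabla f$, integrate with weight $f$, and then ``repackage'' the resulting curvature term into the four-frame combination $R_{1313}+R_{1414}+R_{2323}+R_{2424}-2R_{1234}$ so that PIC gives it a sign. There is no mechanism for this: PIC constrains the curvature only on (totally isotropic) four-frames and gives no pointwise sign to contractions of $W$ or of the identity $fC=\tilde{i}_{\nabla f}{\mathcal W}-(n-1)T$ against arbitrary tensors built from $\mathring{\rm Ric}$ and $\nabla f$; already on the PIC manifold ${\Bbb S}^1\times{\Bbb S}^{n-1}$ such contractions take both signs. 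The way PIC actually enters the paper is entirely different: one forms the specific $2$-form $\omega=df\wedge i_{\nabla f}z$, proves it is closed and satisfies the pointwise inequality $|D\omega|^2\geq|\delta\omega|^2$ on its support, and then applies the Bochner--Weitzenb\"ock formula for $2$-forms together with Zhu's theorem \cite{pz} that PIC makes the Weitzenb\"ock curvature term $\langle E(\omega),\omega\rangle$ strictly positive; integration forces $\omega\equiv 0$. Note also that the paper never proves $(M,g)$ is Einstein at this stage (your target), but only the weaker conclusion $T=0$, hence $C=0$, i.e.\ harmonic curvature; this requires a further structural analysis after $\omega=0$ (no interior critical points of $f$ away from the maximum set, a warped-product/level-set splitting, and a stability--Fredholm argument showing the maximum set is a point and $\partial M$ is connected), and only then can the boundary rigidity theorem of \cite{b-b-b} for harmonic curvature be invoked to get the geodesic ball in ${\Bbb S}^n$. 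Your closing observation---that Einstein plus $\kappa>0$ would yield the conclusion via \cite{br1}---is correct as an endgame, but the Einstein (or even harmonic-curvature) step is precisely what your proposal does not establish.
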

It should be noted that  there is a gap result when $\kappa <0$; if $\kappa <0$ and $f>0$ on $M\setminus \partial M$ with $\partial M=f^{-1}(0)$, then it follows from (\ref{lap1}) and the maximum principle that $\kappa$ should be bigger than $-\frac sna$ where $a=\max_Mf$. It would be interesting if one can find a rigidity result when $\kappa <0$.

One of main  ingredients in proving our result is to show that the maximum set of the potential function $f$ is totally geodesic if it is a hypersurface. Our proof for this property is quite  a long and looks some technical. This property shows that when $\kappa >0$, the maximum set is, in fact,
 a single point and the boundary $\partial M$ must be connected, which induces  $M$ is topologically  a disk.

\section{Preliminaries}
Let $(M,g,f, \kappa)$ be a V-static space with smooth boundary and a potential function $f$. 
Furthermore, we assume that $f>0 $ on $M\setminus \partial M$ and  $\partial M=f^{-1}(0)$ throughout the remainder of the paper.
The V-static equation (\ref{cem1}) can be written as 
\be
Ddf - (\Delta f)g - fr = k g,\label{eqn2017-1-13-1}
\ee
where $Ddf$ and $\Delta f$ denote the Hessian and Laplacian of $f$, and $r$  denotes
Ricci curvature  of the metric $g$. By taking the trace of (\ref{eqn2017-1-13-1}), we have
\be
\tr f = -\frac {n\kappa +s f}{n-1},\label{lap1}
\ee
where $s$ is the scalar curvature of $g$. Thus, we obtain
\be 
fz= Ddf +\frac {n\kappa+sf}{n(n-1)}\, g,\label{cem2}
\ee
where $z$ is the traceless Ricci tensor defined by $z = r - \frac{s}{n}g$.

\begin{prop} \label{prop2021-3-3-1}
Let $(M,g,f, \kappa)$ be a nontrivial V-static space with $\partial M = f^{-1}(0)$
and $f>0$ on $M\setminus \partial M$.
 Then, there are no critical points on the set $f^{-1}(0)$ if $\kappa \ge 0$.
 \label{prop1}
\end{prop}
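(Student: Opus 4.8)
The plan is to argue by contradiction. Suppose $p\in f^{-1}(0)$ is a critical point, so that $f(p)=0$ and $\nabla f(p)=0$, and aim for a contradiction with the hypothesis $f>0$ on $M\setminus\partial M$. The first step is to determine the full Hessian of $f$ at $p$. Evaluating the V-static equation (\ref{eqn2017-1-13-1}) at $p$ and using $f(p)=0$ annihilates the Ricci term, leaving $Ddf(p)=(\Delta f)(p)\,g+\kappa g$. Combining this with the trace identity (\ref{lap1}), which at $p$ reads $\Delta f(p)=-\tfrac{n\kappa}{n-1}$, yields
\[
Ddf(p)=-\frac{\kappa}{n-1}\,g .
\]
Thus the Hessian of $f$ at a critical point lying on the zero set is completely pinned down by $\kappa$.

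Next I would probe $f$ along geodesics shooting from $p$ into the interior. Let $\gamma$ be a unit-speed geodesic with $\gamma(0)=p$ and $\gamma'(0)$ pointing strictly inward, so that (by the collar/normal-exponential neighborhood of $\partial M$) $\gamma(t)\in M\setminus\partial M$, and hence $f(\gamma(t))>0$, for small $t>0$; set $h(t)=f(\gamma(t))$. Since $\gamma$ is a geodesic, $h''=Ddf(\gamma',\gamma')$, and rewriting (\ref{eqn2017-1-13-1}) as $Ddf=(\Delta f)g+fr+\kappa g$ and substituting (\ref{lap1}) produces the scalar second-order ODE
\[
h''(t)=-\frac{\kappa}{n-1}+b(t)\,h(t),\qquad b(t)=\Big(r-\tfrac{s}{n-1}g\Big)(\gamma'(t),\gamma'(t)),
\]
with initial data $h(0)=0$ and $h'(0)=0$.

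From here the two cases separate. When $\kappa>0$ the conclusion is immediate: $h''(0)=-\tfrac{\kappa}{n-1}<0$, so $t=0$ is a strict local maximum of $h$ and $h(t)<0$ for small $t>0$, contradicting $f>0$ in the interior. When $\kappa=0$ the ODE becomes linear and homogeneous, $h''=b(t)h$, with vanishing initial data, so uniqueness of solutions to linear ODEs forces $h\equiv0$; this again contradicts $f(\gamma(t))>0$ for small $t>0$. Either way one reaches a contradiction, so $\nabla f$ cannot vanish on $f^{-1}(0)$.

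I expect the degenerate case $\kappa=0$ to be the main obstacle: there the computed Hessian $Ddf(p)$ itself vanishes, so a naive second-order Taylor test at $p$ is inconclusive. The resolution is to use the full tensorial equation rather than just its value at $p$, observing that it forces $f$ to satisfy a linear homogeneous second-order ODE along \emph{every} geodesic, so that ODE uniqueness propagates the vanishing of $f$ away from the critical point. The one supporting point that must be checked carefully is the otherwise elementary fact that inward geodesics from a boundary point genuinely enter the interior for small positive time, since this is exactly what converts the analytic conclusions $h<0$ or $h\equiv0$ into a contradiction with $f>0$.
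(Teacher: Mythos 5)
Your proof is correct and follows essentially the same route as the paper: restrict $f$ to a unit-speed geodesic emanating from the supposed critical point, derive the ODE $h''=\bigl[z(\gamma',\gamma')-\tfrac{s}{n(n-1)}\bigr]h-\tfrac{\kappa}{n-1}$ with $h(0)=h'(0)=0$, and conclude via $h''(0)=-\tfrac{\kappa}{n-1}<0$ when $\kappa>0$ and via uniqueness for the linear homogeneous ODE when $\kappa=0$. Your only deviations are cosmetic refinements (computing the full Hessian $Ddf(p)=-\tfrac{\kappa}{n-1}g$ and insisting on an inward-pointing geodesic so the contradiction with $f>0$ on $M\setminus\partial M$ is explicit), the latter being slightly more careful than the paper's choice of an arbitrary geodesic.
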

\begin{proof}
In case $\kappa = 0$, a proof follows from \cite{f-m}. Suppose that there is a critical point of $f$ at $p\in f^{-1}(0)$. Let $\gamma$ be a unit-speed
geodesic starting at $p$ and define $h(t)=f(\gamma(t))$.
From (\ref{cem2}), we have
$$
 h''(t)= \left[z (\gamma'(t), \gamma'(t))  - \frac {s}{n(n-1)}\right] h(t) - \frac {\kappa}{n-1}
$$
with $h(0)=0$ and $h'(0)=0$. So, in case $\kappa = 0$, it follows from the uniqueness of ODE solution that $f$ vanishes identically, which is a contradiction.

Now assume $\kappa >0$ so that $f$ should be nontrivial. Since
$$
h''(0)= -\frac {\kappa}{n-1}<0, 
$$
 there exist points $q$ near $p$ such that $f(q)<0$, which is impossible.  
\end{proof}

An easy observation is that $|\n f|^2$ is constant on the boundary $\partial M$. In fact, if
$X$ is  tangent  to $\partial M$, by (\ref{cem2}), we have
$X(|\n f|^2) = 0$.

\begin{lem}\label{lem202-3-3-4}
Let $(M,g,f, \kappa)$ be a V-static space with smooth boundary $\partial M = f^{-1}(0)$ and $\kappa =  0$.
Then any connected component of $\partial M$ is totally geodesic hypersurface in $M$.
\end{lem}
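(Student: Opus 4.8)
The plan is to show that the second fundamental form of each boundary component vanishes by exploiting the $V$-static equation restricted to $\partial M = f^{-1}(0)$ in the vacuum case $\kappa = 0$. Since $f$ is constant (equal to $0$) on the boundary, the gradient $\nabla f$ is normal to $\partial M$, and by Proposition~\ref{prop2021-3-3-1} we know $\nabla f$ does not vanish on $\partial M$, so $\nu := \nabla f/|\nabla f|$ is a well-defined unit normal field along each component. The strategy is to compute the Hessian $Ddf$ in directions tangent to $\partial M$ and relate it to the second fundamental form.

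**First I would** set $\kappa = 0$ in the trace-adjusted equation (\ref{cem2}), which gives $fz = Ddf + \frac{sf}{n(n-1)}g$, equivalently $Ddf = fz - \frac{sf}{n(n-1)}g$. Restricting to the boundary where $f = 0$, this yields $Ddf = 0$ on $\partial M$, i.e. the full Hessian of $f$ vanishes at every point of $\partial M$. **Next I would** recall the standard relation between the Hessian of a function and the second fundamental form of its level set: for a unit normal $\nu = \nabla f/|\nabla f|$ and tangent vectors $X, Y$ to the level set, the second fundamental form satisfies $\mathrm{II}(X,Y) = \frac{1}{|\nabla f|}Ddf(X,Y)$ (this follows from $\mathrm{II}(X,Y) = \langle \nabla_X \nu, Y\rangle$ and the fact that $Ddf(X,Y) = X(Yf) - (\nabla_X Y)f = -\langle \nabla_X Y, \nabla f\rangle$ when $Y$ is tangent to a level set). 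Since $Ddf$ vanishes identically on $\partial M$ and $|\nabla f| > 0$ there, it follows immediately that $\mathrm{II} \equiv 0$, so every connected component of $\partial M$ is totally geodesic.

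**The main subtlety** lies in justifying the claim $Ddf(X,Y) = -\langle \nabla_X Y, \nabla f\rangle$ for $X, Y$ tangent to the boundary: this requires extending $X, Y$ appropriately and using that $f$ is constant along $\partial M$, so that the tangential derivatives $X(Yf)$ contribute terms that the level-set structure controls. Concretely, one writes $Ddf(X,Y) = X(Y(f)) - (\nabla_X Y)(f)$, and since $Y$ is tangent to the level set $\{f = 0\}$ one has $Y(f) = 0$ along that set, whence $X(Y(f)) = 0$ for $X$ tangent as well, leaving $Ddf(X,Y) = -(\nabla_X Y)(f) = -\langle \nabla_X Y, \nabla f\rangle = |\nabla f|\,\langle \nabla_X Y, -\nu\rangle$, which is precisely $-|\nabla f|\,\mathrm{II}(X,Y)$ up to the sign convention for $\mathrm{II}$. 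Thus the vanishing of $Ddf$ on $\partial M$ and the non-vanishing of $|\nabla f|$ (guaranteed by Proposition~\ref{prop2021-3-3-1}) together force $\mathrm{II} = 0$. I expect no serious obstacle here beyond carefully tracking sign and normalization conventions; the essential content is simply that $f = 0$ on $\partial M$ collapses the right-hand side of (\ref{cem2}).
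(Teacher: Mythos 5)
Your proof is correct and follows essentially the same route as the paper: both hinge on the facts that setting $\kappa=0$ and $f=0$ in (\ref{cem2}) forces $Ddf=0$ along $\partial M$, and that Proposition~\ref{prop2021-3-3-1} guarantees $\nabla f\neq 0$ there, so $N=\nabla f/|\nabla f|$ is a well-defined unit normal. The paper states the conclusion as $D_{E_i}N=0$ for a tangent frame (using additionally that $|\nabla f|$ is constant on $\partial M$ to kill the term $E_i(1/|\nabla f|)\nabla f$), which is just the shape-operator form of your identity $\mathrm{II}(X,Y)=\frac{1}{|\nabla f|}Ddf(X,Y)$.
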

\begin{proof}
We can take  $N = \frac{\n f}{|\n f|}$ as a unit normal vector field on (a component of)
 $\partial M$ by Proposition~\ref{prop2021-3-3-1}. 
Choosing a local frame $\{E_1, E_2, \cdots, E_{n-1}, N\}$ 
so that $\{E_1, E_2, \cdots, E_{n-1}\}$ are tangent to $\partial M$, we have
$E_i (|\n f|) = E_i \left(\frac{1}{|\n f|}\right) = 0$ on the set $\partial M$, which implies 
$D_{E_i}N = 0$.
\end{proof}

In case of $\kappa >0$, a connected component of $\partial M$ is not  totally geodesic. 
In fact, it is easy to compute that
$$
D_{E_i}N = - \frac{\kappa}{n-1}\frac{1}{|\n f|}E_i\quad
\mbox{and so}\quad \sum_{i=1}^{n-1}\langle D_{E_i}N, E_i\rangle = - \frac{\kappa}{|\n f|}.
$$

\vspace{.2in}

Now, to show various properties on $V$-statis spaces with PIC, we introduce a $3$-tensor $T$ 
 which plays an important role in investigating the structure of $V$-static spaces. 
For a $V$-static space $(M, g, f, \kappa)$ with boundary $\partial M = f^{-1}(0)$, we define $T$  by 
\be
T= \frac 1{n-2}df \wedge z +\frac 1{(n-1)(n-2)}i_{\nabla f}z \wedge g,
\ee
where $i_{\nabla f}$ denotes the usual interior product with respect to $\nabla f$. 
Here, $\phi \wedge \eta$ for a $1$-form $\phi$ and a symmetric 
$2$-tensor $b\in C^{\infty}(S^2M)$ is defined by
$$
(\phi \wedge b)(X,Y,Z)=\phi (X)b (Y,Z)-\phi (Y)b (X,Z).
$$
It follows from the definition of $T$ that the cyclic summation of $T_{ijk}$ is always vanishing, and
it is skew-symmetric in the fisrt two components.  

The differential $d^Db$ for a symmetric $2$-tensor $b$ is defined by
$$
d^Db(X,Y,Z)=D_Xb(Y,Z)-D_Yb(X,Z).
$$
Then, by taking the $d^D$ to (\ref{cem2}), we obtain
\bea 
df \wedge z(X,Y,Z)+f\, d^Dz(X,Y,Z)&=&d^DDdf (X,Y,Z)+ \frac s{n(n-1)}df\wedge g(X,Y,Z)\\
&=& R(X,Y,Z, \nabla f)+\frac s{n(n-1)}df\wedge g(X,Y,Z),
\eea
since the scalar curvature $s$ is constant. 
Thus,  from the curvature decomposition, we can obtain the following (cf. \cite{h-y})
\be
fC= \tilde{i}_{\nabla f}{\mathcal W}-(n-1)T, \label{eq119}
\ee
where  $C$ and ${\mathcal W}$ are   the Cotton tensor and Weyl tensor, respectively.
And $\tilde{i}$ is an interior product to the final factor defined by
    $$ 
\tilde{i}_{U}\xi (X,Y,Z)= \xi(X,Y, Z, U)
$$
for a $4$-tensor $\xi$ and a vector field $U$. 
For a $3$-tensor $\eta$, $\tilde{i}_{U}\eta$ is similarly defined. 
The Cotton tensor and Weyl tensor are related as follows \cite{Be}
$$
\d {\mathcal W} = - \frac{n-3}{n-2} C
$$
under the identification 
$$ 
\Gamma(T^*M\otimes \Lambda^2M) \equiv \Gamma(\Lambda^2M \otimes T^*M). 
$$
Moreover,  the cyclic summation of $C_{ijk}$ is vanishing as $T$,
and we have $C=d^Dz$ when the scalar curvature $s$ is constant.

Throughout this paper, we denote $N=\nabla f/|\nabla f|$ and $\alpha =z(N,N)$. 
It is clear that $\alpha$ is defined on $M$ except the critical points of $f$. However, 
since $|\alpha|\leq |z|$, the  $\alpha$ can be extended to all of $M$ as a $C^0$ function.

Another property on the tensor $T$ which will be used later is the following identity for the norm:
\be
|T|^2 = \frac{2}{(n-2)^2}|\n f|^2 \left(|z|^2 - \frac{n}{n-1}|i_N z|^2\right).
\label{eqn2021-3-8-2}
\ee

 The following property shows that vanishing of the tensor $T$ looks a little strong condition on 
 $V$-static spaces.
 We say a Riemannian manifold $(M, g)$ has harmonic curvature if $\d R = 0$ for the Riemannian curvature tensor $R$.

\begin{lem} Let $(M,g,f, \kappa)$ be a V-static space on a compact manifold $M$ with smooth boundary $\partial M$. If $T=0$, then $M$ has harmonic curvature. \label{lem23}
\end{lem}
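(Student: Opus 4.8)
The plan is to prove the equivalent statement that the Cotton tensor $C$ vanishes. Since a $V$-static metric has constant scalar curvature, one has $C=d^Dz=d^Dr$, and harmonic curvature $\d R=0$ is equivalent to the Ricci tensor being a Codazzi tensor, i.e. to $d^Dr=0$; thus harmonic curvature is equivalent to $C=0$. When $T=0$, the identity (\ref{eq119}) reduces to $fC=\tilde{i}_{\n f}{\mathcal W}$, so $W_{ijkl}\n_lf=fC_{ijk}$, and in particular this quantity vanishes on $\p M=f^{-1}(0)$. If $n=3$ the Weyl tensor is identically zero, hence $fC=0$ and $C\equiv 0$ on the interior; so from now on I assume $n\ge 4$.

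First I would record the pointwise algebraic content of $T=0$. Expanding the definition of $T$ and contracting once with $\n f$ forces, on the dense open set $\{\n f\ne 0\}$, that $i_{\n f}z=\alpha\,\n f$ (so $\n f$ is an eigendirection of $z$) and that $z$ has the normal form
$$ z=\alpha\Big(\tfrac{n}{n-1}\,N\otimes N-\tfrac1{n-1}\,g\Big),\qquad \alpha=z(N,N), $$
which is precisely compatible with the norm identity (\ref{eqn2021-3-8-2}). The decisive consequence is that, because $\mathcal W$ is totally trace free, every contraction of $z\otimes z$ against $\mathcal W$ loses its $g$-terms, while the remaining $N\otimes N\otimes N\otimes N$ contribution is annihilated by the skew-symmetry of $\mathcal W$ in its first pair of indices; hence $W_{ijkl}z_{il}z_{jk}=0$ identically on $M$.

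The heart of the argument is an integration by parts for $\int_M f|C|^2$. Using $fC=\tilde{i}_{\n f}{\mathcal W}$ together with $C=d^Dz$ and the skew-symmetry of $\mathcal W$, I would write
$$ \int_M f|C|^2=2\int_M W_{ijkl}\,(\n_lf)\,(\n_iz_{jk}), $$
and integrate by parts in the $i$-slot. The boundary integral vanishes because $W_{ijkl}\n_lf=fC_{ijk}=0$ on $\p M$. In the interior term $\n_i(W_{ijkl}\n_lf)$ I would replace the Hessian $\n_i\n_lf$ through the $V$-static equation (\ref{cem2}); its trace part is killed by $\mathcal W$, leaving exactly $\int_M f\,W_{ijkl}z_{il}z_{jk}$, which vanishes by the previous step, plus a term in $\n_iW_{ijkl}$. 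For the latter I would use the contracted second Bianchi identity $\d{\mathcal W}=-\tfrac{n-3}{n-2}C$ and then substitute the normal form of $z$, turning that term into an explicit integral in the eigenvalue function $\alpha$ and in $N$ (through $\langle N,\n\alpha\rangle$ and $\mathrm{div}\,N$). A further integration by parts, whose boundary contribution again vanishes on $f^{-1}(0)$, together with the $V$-static relations for these quantities, is designed to show that this integral vanishes as well. One then concludes $\int_M f|C|^2=0$, and since $f>0$ on $M\setminus\p M$, that $C\equiv 0$.

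The main obstacle is to arrange this computation so that it does not simply reproduce itself. The three linear relations $fC=\tilde{i}_{\n f}{\mathcal W}$, $\d{\mathcal W}=-\tfrac{n-3}{n-2}C$ and $C=d^Dz$ feed back into one another, and if one keeps re-expressing everything through $C$ the integration by parts collapses to the tautology $\int_M f|C|^2=\int_M f|C|^2$. The genuinely new input that breaks this loop is the nonlinear normal form of $z$ forced by $T=0$: it is what makes the Weyl--$z$--$z$ contraction drop out and what lets the remaining divergence-of-Weyl term be evaluated explicitly. Carrying out that explicit evaluation, and controlling the Weitzenb\"ock-type commutator terms produced by $d^Dz$, is the long and technical core of the proof.
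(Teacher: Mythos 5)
Your overall strategy is sound and is essentially the paper's own computation, organized in the reverse order: the paper takes the divergence of $fC=\tilde{i}_{\nabla f}\mathcal W$ (its (\ref{eq1121})), pairs the result with $z$, and integrates, while you pair $fC=\tilde{i}_{\nabla f}\mathcal W$ with $C=d^Dz$ and integrate by parts once. Either way, the interior terms that survive are exactly the two contractions $\langle i_{\nabla f}C,z\rangle$ and $f\,W_{ijkl}z_{il}z_{jk}$ (the latter is the paper's $f\langle\mathring{\mathcal W}z,z\rangle$), and the boundary terms die because $f=0$ on $\partial M$. Your reduction to $C=0$, the algebraic normal form of $z$ forced by $T=0$, the vanishing of the Weyl--$z$--$z$ contraction, and the vanishing of the boundary integral are all correct and match the paper.

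The gap is your treatment of the remaining term $\int_M(\nabla_iW_{ijkl})(\nabla_lf)\,z_{jk}$: you propose to rewrite it via $\delta\mathcal W=-\frac{n-3}{n-2}C$ and the normal form of $z$ as ``an explicit integral in $\alpha$ and $N$,'' to be killed by a further integration by parts, and you flag this unexecuted step as the long technical core of the proof. In fact no further integration by parts and no commutator analysis are needed, and your worry that substituting $C$ back in produces a tautology is unfounded: after using $\delta\mathcal W=-\frac{n-3}{n-2}C$, this term is a constant multiple of $\int_M\langle i_{\nabla f}C,z\rangle$, an algebraic pairing containing no derivatives of $z$, and it vanishes pointwise by exactly the ingredients you already have. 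Indeed, writing it in a frame with $E_n=N$, the mixed terms die because $z(E_j,N)=0$; the tangential part is $-\frac{\alpha}{n-1}\sum_{j\le n-1}C(\nabla f,E_j,E_j)$, which by trace-freeness of the Cotton tensor in its last two slots (valid since $s$ is constant) equals $\frac{\alpha}{n-1}C(\nabla f,N,N)$; and this is zero because $fC(X,Y,\nabla f)=\mathcal W(X,Y,\nabla f,\nabla f)=0$. This is precisely the paper's identity (\ref{eq1120}). Once you insert it, your argument closes immediately: $\int_M f|C|^2=0$, hence $C\equiv0$ on $M\setminus\partial M$ where $f>0$, and by continuity on all of $M$.
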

\begin{proof} 
Since $\d R = - d^D r$, it suffices to prove that $C=0$. The idea of the proof follows from \cite{hy1}. By the definition of $T$, for an orthonormal frame $\{E_i\}_{1\leq i\leq n}$ with $E_n=N$, we have
\bea
(n-2)i_{\nabla f}T(E_i, E_j)&=& |\nabla f|^2z(E_i, E_j)+\frac 1{n-1}z(\nabla f, \nabla f)\d_{ij}\\
& & -\frac 1{n-1}z(\nabla f, E_i)df(E_j)-z(\nabla f, E_j)df(E_i).
\eea
Since $T=0$ by assumption, for $1\leq i, j\leq n-1$
\be
z_{ij}=z(E_i, E_j)= -\frac 1{n-1}\a \, \delta_{ij}. \label{eq1118}
\ee 
Also, for $1\leq i\leq n-1$ 
$$0=(n-2)i_{\nabla f}T(E_i, N)=  \frac {n-2}{n-1}\, z(E_i, N)|\nabla f|^2, $$
implying that
\be z(E_i, N)=0 \label{eq1119}\ee
for $1\leq i\leq n-1$.
By (\ref{eq1118}) and (\ref{eq1119}), we have
\be\langle i_{\nabla f}C, z\rangle = -\frac {\alpha}{n-1}\sum_{i=1}^{n-1} C(\nabla f, E_i, E_i)=\frac {\alpha}{n-1}C(\nabla f, N, N)=0.
\label{eq1120}
\ee

On the other hand, it follows from (\ref{eq119}) together with  $T=0$ that
\be 
f\, C =\tilde{i}_{\nabla f}{\mathcal W}, \label{eq1109}
\ee
and so $C(X, Y, \n f) = 0$. Since the cyclic summation of $C$ is vanishing, this implies
$$ 
C(Y, \nabla f, X)+C(\nabla f, X, Y) = 0.
$$
By taking the divergence $\d$ of (\ref{eq1109}), we can show the following (cf. \cite{hy1})
$$ 
-i_{\nabla f}C  +f\delta C =\delta (\tilde{i}_{\nabla f}{\mathcal W})= 
\frac {n-3}{n-2} i_{\n f} C +f \mathring{\mathcal W}z,
$$
where $\mathring{\mathcal W}z$ is defined by
$$
\mathring{\mathcal W}z(X, Y) = z({\mathcal W}(X, E_i)Y, E_i)
$$
for a local frame $\{E_i\}$. Thus,
\be
f\delta C=\frac {2n-5}{n-2} i_{\n f} C +f\mathring{\mathcal W}z. \label{eq1121}
\ee
It follows from the definition of $\mathring{\mathcal W}z$ together with (\ref{eq1109}) that
$$
\mathring{\mathcal W}z (\n f, X) = - f \langle i_XC, z\rangle
$$
for any vector field $X$. In particular, by (\ref{eq1120}), we have
\be 
\mathring{\mathcal W}z(\nabla f, \nabla f)=-f\langle i_{\nabla f}C , z\rangle = 0, \label{eq1122}
\ee
Consequently, 
\be \delta C(N,N)= \mathring{\mathcal W}z(N,N)= 0.\label{eqn2021-3-25-1}
\ee
Therefore, by (\ref{eq1120}), (\ref{eq1121}), and (\ref{eqn2021-3-25-1})
\bea
f\langle \delta C, z\rangle &=& \frac {2n-5}{n-2} \langle i_{\nabla f}C, z\rangle +f\langle \mathring{\mathcal W}z, z\rangle= f\sum_{1\leq i,j\leq n-1}\mathring{\mathcal W}z(E_i, E_j)z_{ij}\\
&=&-\frac {\a f}{n-1} \sum_{1\leq i\leq n-1}\mathring{\mathcal W}z(E_i, E_i)=\frac {\alpha f}{n-1}\mathring{\mathcal W}z(N,N)=0,
\eea
implying that  $$\langle \delta C, z\rangle =0.$$
Hence, from
$$ C(E_i, E_j, E_k)D_{E_i}z_{jk}= \frac 12 |C|^2,$$
we obtain
$$ 0=\int_M \langle \delta C, z\rangle =\int_{\partial M}\langle i_NC, z\rangle + \frac 12 \int_M |C|^2 
= \frac 12 \int_M |C|^2 .$$
This implies that $C=0$ on all of $M$, proving our Lemma.
\end{proof}

\section{V-static spaces with PIC}
In this section we assume that $(M,g,f, \kappa)$ is a V-static space on a compact manifold $M$
 with positive isotropic curvature. We claim that $z(\nabla f, X)=0$ for any vector field $X$ orthogonal to $\nabla f$ by following the idea in the case of Besse conjecture \cite{hy1}. First, we defined a $2$-form $\omega$ by
$$ 
\omega := df \wedge i_{\nabla f}z.
$$
By the definition of $T$, for any vector fields $X$ and $Y$
\be
\quad T(X,Y, \nabla f)=\frac 1{n-1}df\wedge i_{\nabla f}z(X,Y)=\frac 1{n-1}\omega (X,Y) =-\frac 1{n-1}f\, \tilde{i}_{\nabla f}C(X,Y).\label{eq1001}
\ee
Here, the last equality follows from (\ref{eq119}). Thus, we have
\be 
\omega = (n-1)\tilde{i}_{\nabla f}T =-f \,\tilde{i}_{\nabla f}C. \label{eqn2021-3-17-1}
\ee
Let $\{E_i\}_{i=1}^n$ be an orthonormal frame with $E_n=N=\nabla f/|\nabla f|$. It is clear that 
$$\omega (E_j, E_k)=0$$
for $1 \leq j,k\leq n-1$ by  definition of $\o$.
 Thus, if $\tilde{i}_{\nabla f}C(N, E_i)=0$ for $1\leq i\leq n-1$, then $\omega =0$.

Next, by replacing the function $1+f$ by $f$ in Lemmas 5.3 and 5.4 of \cite{hy1}, we can obtain
the following properties.

\begin{lem}\label{lem31}
As a $2$-form, $\tilde{i}_{\nabla f}C$ is an exact form. More precisely, we have
$$ \tilde{i}_{\nabla f}C=di_{\nabla f}z.$$
\end{lem}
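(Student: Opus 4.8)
The plan is to compute the exterior derivative $d(i_{\nabla f}z)$ directly in terms of covariant derivatives and match it term by term against $\tilde{i}_{\nabla f}C$. Writing $\beta := i_{\nabla f}z$ for the $1$-form $\beta(Y) = z(\nabla f, Y)$, I would start from the standard identity $d\beta(X,Y) = (D_X\beta)(Y) - (D_Y\beta)(X)$, valid for any $1$-form, which reduces everything to evaluating $(D_X\beta)(Y)$. By the Leibniz rule this splits as $(D_X\beta)(Y) = (D_Xz)(\nabla f, Y) + z(D_X\nabla f, Y)$, the first piece being a component of $D_Xz$ and the second involving the Hessian $D_X\nabla f = (Ddf)(X,\cdot)^\sharp$.

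This is precisely where the $V$-static equation enters. Substituting $Ddf = fz - \frac{n\kappa + sf}{n(n-1)}g$ from (\ref{cem2}) gives
$$
z(D_X\nabla f, Y) = f\,z^2(X,Y) - \frac{n\kappa + sf}{n(n-1)}\,z(X,Y),
$$
where $z^2(X,Y) := \sum_k z(X,E_k)z(E_k,Y)$ is the symmetric square of $z$. Now I antisymmetrize in $X$ and $Y$. The key observation is that both $z^2$ and $z$ are symmetric $2$-tensors, so every contribution coming from the Hessian cancels identically in $d\beta(X,Y) = (D_X\beta)(Y) - (D_Y\beta)(X)$. What survives is $d\beta(X,Y) = (D_Xz)(\nabla f, Y) - (D_Yz)(\nabla f, X)$. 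Since $z$ is symmetric its covariant derivative $D_Xz$ is again a symmetric $2$-tensor, so $(D_Xz)(\nabla f, Y) = (D_Xz)(Y,\nabla f)$, and recalling that $C = d^Dz$ because the scalar curvature $s$ is constant, the surviving expression is exactly $C(X,Y,\nabla f) = \tilde{i}_{\nabla f}C(X,Y)$. This establishes the identity $\tilde{i}_{\nabla f}C = d\,i_{\nabla f}z$; exactness is then immediate, since the right-hand side is visibly $d$ applied to the globally defined $1$-form $i_{\nabla f}z$.

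The computation itself is routine, so I do not expect a genuine obstacle; the only point that demands care is the bookkeeping that isolates the Cotton term. One must check that the Hessian supplies only symmetric contributions, so that they drop out entirely under the antisymmetrization inherent in both $d$ and $C$ — and this is precisely what the $V$-static equation (\ref{cem2}) guarantees, since it expresses $Ddf$ as a combination of the symmetric tensors $fz$ and $g$. I therefore regard this symmetry cancellation, together with the symmetry of $D_Xz$, as the heart of the argument rather than a difficulty.
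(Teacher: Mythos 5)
Your computation is correct. The chain Leibniz rule $(D_X\beta)(Y)=(D_Xz)(\nabla f,Y)+z(D_X\nabla f,Y)$, the substitution of the $V$-static equation (\ref{cem2}) for the Hessian, the cancellation of the resulting symmetric tensors $fz^2$ and $z$ under antisymmetrization, and the identification of the surviving terms with $d^Dz(X,Y,\nabla f)=C(X,Y,\nabla f)$ (legitimate because $g$ has constant scalar curvature, as recorded in the paper from \cite{cem}, and because $D_Xz$ inherits the symmetry of $z$) together give exactly $\tilde{i}_{\nabla f}C=d\,i_{\nabla f}z$. Note that the paper does not actually prove this lemma: it only asserts that it follows from Lemmas 5.3 and 5.4 of \cite{hy1} after replacing $1+f$ by $f$, so your argument supplies a self-contained proof of a step the paper outsources, and it is the natural one -- the cited lemmas rest on the same mechanism, namely that the structural equation forces the Hessian contribution to $d(i_{\nabla f}z)$ to be symmetric and hence invisible to the exterior derivative. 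The only conventions you should flag explicitly are the factor-free convention $d\beta(X,Y)=(D_X\beta)(Y)-(D_Y\beta)(X)$ for one-forms and the paper's definition $d^Db(X,Y,Z)=D_Xb(Y,Z)-D_Yb(X,Z)$; with those fixed, your identification of $d\beta$ with $\tilde{i}_{\nabla f}C$ is exact, with no stray factor of $2$.
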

\begin{lem}\label{lem32}
$\omega$ is a closed $2$-form, i.e., $d\omega =0$.
\end{lem}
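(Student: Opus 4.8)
The plan is to exploit the two representations of $\omega$ that are already available and reduce the computation to a one-line manipulation of differential forms. Writing $\beta := i_{\nabla f}z$ for the $1$-form that occurs throughout, the defining formula reads $\omega = df \wedge \beta$, while combining (\ref{eqn2021-3-17-1}) with the exactness statement of Lemma~\ref{lem31}, namely $\tilde{i}_{\nabla f}C = d\beta$, gives the second representation $\omega = -f\,d\beta$. Thus the entire proof rests on the single algebraic relation between these two expressions, $df\wedge\beta = -f\,d\beta$, obtained by equating them.

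First I would differentiate the representation $\omega = -f\,d\beta$. Since $d^2\beta = 0$, the Leibniz rule yields $d\omega = -df\wedge d\beta$, so the whole statement reduces to showing that $df\wedge d\beta = 0$. To see this, I would wedge $df$ into the identity $df\wedge\beta = -f\,d\beta$ on the left: using $df\wedge df = 0$ the left-hand side vanishes, leaving $0 = -f\,(df\wedge d\beta)$, that is, $f\,(df\wedge d\beta) = 0$ as a pointwise tensor identity on all of $M$. Consequently $df\wedge d\beta = 0$ wherever $f \neq 0$, and therefore $d\omega = -df\wedge d\beta = 0$ on the open set $M\setminus f^{-1}(0)$. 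Note that this route divides out the factor $f$ only implicitly, so no difficulty arises at critical points of $f$ in the interior.

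The sole remaining point is to pass from $M\setminus f^{-1}(0)$ to all of $M$. Since $\partial M = f^{-1}(0)$ and $f>0$ on the interior, the set $\{f\neq 0\}$ is exactly the interior $M\setminus\partial M$, which is dense in $M$; as $d\omega$ is a continuous $3$-form vanishing on a dense set, it vanishes identically, which proves $d\omega = 0$. I expect this density/continuity step to be the only delicate part, and it is mild: alternatively one could invoke the analyticity of $f$ and $g$ from \cite{cem} to exclude any contribution along $f^{-1}(0)$. Either way there is no real obstacle, the content of the lemma being precisely the observation that wedging $df$ annihilates the term $df\wedge\beta$, so that the only surviving piece of $d\omega$ is killed by the factor $f$.
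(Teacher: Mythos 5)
Your proof is correct. Note first that the paper itself contains no argument for this lemma: it is imported wholesale from the reference, via the remark that Lemmas 5.3 and 5.4 of \cite{hy1} carry over after replacing $1+f$ by $f$. Your argument, by contrast, is self-contained modulo Lemma~\ref{lem31}, and it uses exactly the ingredients the paper has already displayed: the definition $\omega = df\wedge\beta$ with $\beta = i_{\nabla f}z$, the identity (\ref{eqn2021-3-17-1}) giving $\omega = -f\,\tilde{i}_{\nabla f}C$, and the exactness $\tilde{i}_{\nabla f}C = d\beta$ of Lemma~\ref{lem31}. All steps check out: the relation $df\wedge\beta = -f\,d\beta$ holds on all of $M$; the Leibniz computation $d\omega = -df\wedge d\beta$ is convention-independent; wedging with $df$ legitimately yields the pointwise identity $f\,(df\wedge d\beta)=0$, so $d\omega=0$ on $\{f\neq 0\}=M\setminus\partial M$; and since $\partial M = f^{-1}(0)$ has empty interior while $d\omega$ is smooth up to the boundary, the density-and-continuity step closes the argument (your alternative via analyticity from \cite{cem} would also work). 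Your observation that no division by $|\nabla f|$ occurs, so interior critical points of $f$ cause no trouble, is also accurate. What your route buys is that the closedness of $\omega$ is seen to require no curvature positivity and no input beyond the structure equations already derived in Section 3, making this step independent of the unpublished reference; what it does not remove is the reliance on Lemma~\ref{lem31}, which is still quoted from \cite{hy1}. Since the paper cites Lemmas 5.3 and 5.4 of \cite{hy1} as a pair, it is very plausible that the cited proof runs along the same lines as yours, but as written your version is a genuine completion of a gap in the present paper rather than a restatement of its proof.
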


On the other hand, let $\Omega=\{p\in M\,|\, \omega_p \neq 0$ on $T_pM\}$. Then $\Omega $ is an open subset of $M$. Note that $\Omega \cap \partial M = \emptyset$ by 
(\ref{eqn2021-3-17-1}).
Following an argument of \cite{hy1} by replacing $1+f$ by $f$, we have the following local inequality. The proof is not easy, and we refer \cite{hy1} for the proof.
\begin{lem} \label{lem33}
Suppose that $\omega_p \neq 0$  at $p\in M$. Then
$$ |D\omega|^2(p) \geq |\delta \omega|^2 (p).$$ 
\end{lem}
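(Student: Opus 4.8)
The plan is to establish the inequality pointwise at $p$ by choosing an orthonormal frame adapted to the algebraic structure of $\omega$ and then decomposing the $3$-tensor $D\omega$ into blocks whose norms can be compared directly. The structural input that makes this possible is that $\omega = df \wedge i_{\nabla f}z$ is everywhere \emph{decomposable}: writing $\xi = i_{\nabla f}z$ and discarding the component of $\xi$ along $df$, we have $\omega = df \wedge \xi^{\perp}$, so $\omega_p$ has rank two. I would therefore pick an orthonormal frame $\{E_i\}$ at $p$ with $E_2 = N = \nabla f/|\nabla f|$ and $E_1$ spanning the part of $\xi$ orthogonal to $N$, so that $\omega_p = \lambda\, e_1\wedge e_2$ with $\lambda = |\omega_p| \neq 0$ and all other components vanishing. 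Every subsequent computation is organized by the splitting of indices into the ``plane'' directions $\{1,2\}$ and the ``generic'' directions $\{3,\dots,n\}$.

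The next step is to extract the constraints on $D\omega$. Decomposability gives $\omega\wedge\omega \equiv 0$; differentiating yields $(D_X\omega)\wedge\omega = 0$ for every $X$, which at $p$ forces every component $(D_{E_k}\omega)(E_i,E_j)$ with both $i,j\geq 3$ to vanish. Closedness of $\omega$ (Lemma~\ref{lem32}) contributes the cyclic identity $(D_{E_i}\omega)(E_j,E_k)+(D_{E_j}\omega)(E_k,E_i)+(D_{E_k}\omega)(E_i,E_j)=0$, which makes the generic--generic derivative blocks symmetric and links $(D_{E_a}\omega)(E_1,E_2)$ to the mixed components. After imposing these, both $|D\omega|^2$ and $|\delta\omega|^2$ can be written out in the surviving scalars, and the comparison reduces to a constrained quadratic inequality; the terms one must dominate are the traces occurring in $\delta\omega$, for instance $\delta\omega(E_1) = (D_N\omega)(E_1,N) + \sum_a (D_{E_a}\omega)(E_1,E_a)$.

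The hard part is precisely these trace terms: naive Cauchy--Schwarz runs the wrong way, bounding $\big(\sum_a (D_{E_a}\omega)(E_1,E_a)\big)^2$ by $(n-2)\sum_a(\cdot)^2$, and in fact closedness together with decomposability \emph{alone} is not sufficient. A direct check (already visible when $n=3$) shows that $|D\omega|^2-|\delta\omega|^2$ then retains cross terms linear in the plane-direction components $(D_{E_1}\omega)(E_1,E_2)$ and $(D_N\omega)(E_1,E_2)$, which can be made negative. This is where the $V$-static structure must enter. The decisive extra relation is $\omega = -f\, d\, i_{\nabla f}z$, obtained by combining Lemma~\ref{lem31} with (\ref{eqn2021-3-17-1}); since $f>0$ on $\Omega$ it gives $d\xi = -\omega/f$, pinning the exterior derivative of $\xi$ to $\omega$ itself. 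Together with the Hessian identity (\ref{cem2}), which expresses $Ddf$ through $fz$, this determines the plane-direction derivative components that were free in the abstract setting, and it is exactly this information that allows one to complete the squares.

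Accordingly, the final step is to substitute $Ddf = fz - \tfrac{n\kappa+sf}{n(n-1)}g$ and $\xi = i_{\nabla f}z$ into $D_X\omega = Ddf(X,\cdot)\wedge \xi + df \wedge D_X\xi$, use $d\xi = -\omega/f$ to eliminate the antisymmetric part of $D\xi$, and reorganize $|D\omega|^2-|\delta\omega|^2$ as a sum of squares. I expect this reorganization --- tracking the many cross terms and verifying that the coefficients conspire into nonnegative combinations --- to be the genuinely laborious and delicate part, which is why the statement follows the long argument of \cite{hy1} (with $1+f$ there replaced by $f$). Note that PIC plays no role in this pointwise inequality; it is only invoked afterwards, in the global integral argument that uses this lemma.
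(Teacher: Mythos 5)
Your proposal has a genuine gap: everything you actually establish is setup, and the step that would prove the inequality is explicitly deferred. The adapted frame, the vanishing $(D_{E_k}\omega)(E_i,E_j)=0$ for $i,j\geq 3$ (from decomposability), the cyclic identity from Lemma~\ref{lem32}, and the relation $d\,i_{\nabla f}z=-\omega/f$ (from Lemma~\ref{lem31} and (\ref{eqn2021-3-17-1})) are all correct, but the assertion that the V-static equation (\ref{cem2}) then ``allows one to complete the squares'' \emph{is} the lemma, and you concede you have not carried it out. This concession is fatal precisely because of your own (correct) observation that the constraints you did derive are insufficient: concretely, with $m=n-2\geq 2$, the configuration $(D_{E_a}\omega)(E_1,E_b)=t\,\delta_{ab}$ for $a,b\geq 3$, $(D_{N}\omega)(E_1,N)=mt$, and all other components zero satisfies both the decomposability constraint and closedness, yet gives
$$
|\delta\omega|^2=(2mt)^2=4m^2t^2,\qquad |D\omega|^2=2m^2t^2+2mt^2,
$$
so $|\delta\omega|^2-|D\omega|^2=2mt^2(m-1)>0$ for $n\geq 4$. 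Hence the truth of the lemma rides entirely on the omitted computation, and a proof cannot stop where yours does. (For context: the paper itself does not reprove this lemma either; it expressly defers to \cite{hy1}, noting the proof ``is not easy.'' What is being asked for is exactly that several-page component analysis with $1+f$ replaced by $f$; your outline matches its general shape but omits its substance.)

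Two further inaccuracies in the outline are worth flagging. First, your parenthetical that the failure of the naive argument is ``already visible when $n=3$'' is backwards: in dimension $3$ each component of $\delta\omega$ is a sum of exactly two \emph{distinct} components of $D\omega$, so $|\delta\omega|^2\leq |D\omega|^2$ holds for \emph{every} $2$-form (with the norm conventions under which the Bochner formula (\ref{eq999}) is applied), closed or not; counterexamples require $n\geq 4$, as above. Second, and more seriously for your strategy, the claim that $d\xi=-\omega/f$ together with (\ref{cem2}) ``determines the plane-direction derivative components'' is unjustified: writing $\xi=i_{\nabla f}z$, one has $D_X\xi=z(D_X\nabla f,\cdot)+(D_Xz)(\nabla f,\cdot)$; the first term is pinned down by (\ref{cem2}), and the exactness relation controls only the antisymmetrization of the second, while the symmetric part of $(X,Y)\mapsto (D_Xz)(\nabla f,Y)$ involves first derivatives of $z$ that the V-static equation does not determine pointwise. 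These are exactly the free quantities your sum-of-squares reorganization would have to absorb; controlling them is what makes the argument in \cite{hy1} long, and your proposal gives no indication of how that is to be done.
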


Now, we are ready to prove our claim. 

\begin{thm} \label{thm2021-3-17-2}
Let $(M,g, f, \kappa)$ be a V-static space on a compact manifold $M$ with smooth boundary 
$\partial M=f^{-1}(0)$. If $(M,g)$ has PIC, then the $2$-form $\omega$ vanishes on $M$. \label{thm34}
\end{thm}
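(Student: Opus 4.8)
The plan is to run a Bochner argument on the closed $2$-form $\omega$, using the refined Kato inequality of Lemma~\ref{lem33} to bound the rough Laplacian from below and the PIC hypothesis to bound the Weitzenb\"ock curvature term from below. The starting point is the Weitzenb\"ock formula for $2$-forms,
$$
d\delta\omega = \nabla^*\nabla\omega + \mathcal{Q}(\omega),
$$
where $\mathcal{Q}$ denotes the Bochner curvature operator and I have used $d\omega=0$ (Lemma~\ref{lem32}) to replace $dd^*\omega + d^*d\omega$ by $d\delta\omega$. Pairing with $\omega$ and integrating over $M$, the two integrations by parts produce boundary integrals over $\partial M$; since $f=0$ on $\partial M$, equation (\ref{eqn2021-3-17-1}) gives $\omega=-f\,\tilde{i}_{\nabla f}C=0$ on $\partial M$, so every boundary term carries a factor of $\omega$ and vanishes. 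This leaves
$$
\int_M \langle \mathcal{Q}(\omega),\omega\rangle = \int_M\big(|\delta\omega|^2-|\nabla\omega|^2\big).
$$
By Lemma~\ref{lem33} we have $|\nabla\omega|^2\ge|\delta\omega|^2$ wherever $\omega\neq0$ (and both sides vanish on the interior of the zero set), so the right-hand side is $\le 0$.

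The crucial point is that the left-hand side is \emph{non-negative}, and for this I would exploit the special algebraic form of $\omega$. As already noted, $\omega(E_j,E_k)=0$ whenever $E_j,E_k$ are tangent to the level sets of $f$; writing $df=|\nabla f|\,N^\flat$ and discarding the $N$-component of $i_{\nabla f}z$, one sees that
$$
\omega=|\nabla f|^2\,N^\flat\wedge\beta,\qquad \beta:=\textstyle\sum_i z(N,E_i)\,E_i^\flat,
$$
so $\omega$ is a \emph{decomposable} $2$-form: at each point where $\omega\neq0$ it is a positive multiple of $e_1^\flat\wedge e_2^\flat$ for an orthonormal pair $e_1=N$, $e_2=\beta/|\beta|$. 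For such a form a direct computation of the Bochner operator gives
$$
\langle\mathcal{Q}(\omega),\omega\rangle=|\omega|^2\sum_{k\ge3}\big(R_{1k1k}+R_{2k2k}\big),
$$
and the indefinite cross term $R_{1234}$ of the PIC expression is absent precisely because $\omega$ is decomposable.

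It therefore remains to show that PIC forces $\sum_{k\ge3}(R_{1k1k}+R_{2k2k})>0$. Applying the PIC inequality to the four-frame $\{e_1,e_2,e_3,e_4\}$ and then to $\{e_1,e_2,e_3,-e_4\}$ and adding eliminates the $R_{1234}$ term, yielding
$$
R_{1313}+R_{1414}+R_{2323}+R_{2424}>0
$$
for \emph{every} orthonormal pair $e_3,e_4$ in $V:=\mathrm{span}(e_1,e_2)^\perp$. Viewing $Q(v):=R_{1v1v}+R_{2v2v}$ as a symmetric quadratic form on the $(n-2)$-dimensional space $V$, this says the sum of any two distinct eigenvalues of $Q$ is positive; since PIC forces $n\ge4$ and hence $\dim V\ge2$, an elementary eigenvalue argument (the two smallest eigenvalues already sum to a positive number, and the remaining ones are then each positive) gives $\mathrm{tr}_V Q=\sum_{k\ge3}(R_{1k1k}+R_{2k2k})>0$. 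Thus $\langle\mathcal{Q}(\omega),\omega\rangle\ge0$ on all of $M$, with strict inequality wherever $\omega\neq0$.

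Combining the two sign conclusions forces $\int_M\langle\mathcal{Q}(\omega),\omega\rangle=0$ with a non-negative integrand, so $\langle\mathcal{Q}(\omega),\omega\rangle\equiv0$ and therefore $\omega\equiv0$ on $M$. I expect the main obstacle to be the two displayed curvature computations of the third and second paragraphs: identifying the Bochner term of the decomposable form $\omega$ and proving its positivity under PIC. The decomposability of $\omega$ is exactly what removes the indefinite $R_{1234}$ contribution and makes the PIC pairing trick applicable, so it is the load-bearing structural fact here; the refined Kato estimate (Lemma~\ref{lem33}) and the vanishing of the boundary terms are then comparatively routine inputs.
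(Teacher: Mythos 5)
Your proof is correct, and its skeleton is the same as the paper's: the Weitzenb\"ock formula for the closed $2$-form $\omega$ (Lemma~\ref{lem32}), the pointwise inequality $|D\omega|^2\ge|\delta\omega|^2$ of Lemma~\ref{lem33}, and positivity of the Weitzenb\"ock curvature term under PIC. But you deviate from the paper at its two least self-contained points, and both deviations are genuine alternatives. First, the paper does not integrate over $M$: it integrates the Bochner identity over a connected component $\Omega_0$ of $\{\omega\neq0\}$, killing boundary terms via $\omega=0$ on $\partial\Omega_0$, which implicitly invokes the divergence theorem on a domain whose boundary is only a level set of $|\omega|$. Your global integration, with boundary terms vanishing because $\omega=-f\,\tilde{i}_{\nabla f}C=0$ on $\partial M$ (as $f=0$ there) and with Lemma~\ref{lem33} extended to all of $M$ by continuity, sidesteps that regularity issue. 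Second, and more substantively, the paper obtains $\langle E(\omega),\omega\rangle>0$ by citing Proposition 2.3 of \cite{pz}, a Micallef--Wang type estimate valid for \emph{arbitrary} $2$-forms, whose proof needs the complexified isotropic-curvature machinery. You instead exploit that $\omega=df\wedge i_{\nabla f}z$ is decomposable (a wedge of two $1$-forms, so $\omega=|\nabla f|^2 N^\flat\wedge\beta$), compute the Weitzenb\"ock term of a simple form as $|\omega|^2\sum_{k\ge3}\left(R_{1k1k}+R_{2k2k}\right)$ --- which is correct: the Ricci contribution is $\mathrm{Ric}_{11}+\mathrm{Ric}_{22}$, the cross term reduces to $-2R_{1212}$, and the indefinite $R_{1234}$-type terms drop out precisely because $\omega$ is simple --- and then derive positivity of this trace from PIC by the frame reflection $e_4\mapsto -e_4$ together with the eigenvalue observation that $\lambda_1+\lambda_2>0$ forces $\lambda_j>0$ for all $j\ge2$ and hence a positive trace (valid since $\dim V=n-2\ge2$, PIC requiring $n\ge4$). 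The result is a proof that is self-contained exactly where the paper leans on an external result; what your argument gives up --- validity for non-decomposable forms --- costs nothing here, since decomposability of $\omega$ is automatic from its definition.
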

\begin{proof} 
It suffices to prove that $\Omega =\emptyset$. Suppose, on the contrary, $\Omega \ne \emptyset$.
 For $p\in \Omega$, let $\Omega_0$ be a connected component of $\Omega$ containing $p$. 
Note that  $\tr \omega = -d\delta \omega$ by Lemma~\ref{lem32}.
By the Bochner-Weitzenb\"{o}ck formula for $2$-forms (c.f. \cite{pl1}, \cite{wu}),  we have
\be 
\frac 12 \tr |\omega|^2=\langle \tr \omega, \omega\rangle +|D\omega|^2+\langle E(\omega), \omega\rangle. \label{eq999}
\ee
Here, $E(\omega)$ is a local $2$-form containing isotropic curvature terms as its coefficients. In particular, if $(M,g)$ has PIC, then by Proposition 2.3 in \cite{pz} we have
\be \langle E(\omega), \omega \rangle >0.\label{eq997}\ee
Thus, by integrating (\ref{eq999})
\be
\frac 12 \int_{\Omega_0}\tr |\omega|^2 =\int_{\Omega_0}(\langle \tr \omega, \omega\rangle+|D\omega|^2)+\int_{\Omega_0} \langle E(\omega), \omega\rangle.
\label{eq998}
\ee
Since $\o = 0$ on $\partial \Omega_0$ and $\o$ is  a closed form, we have
$$
0=\int_{\Omega_0} |D\omega|^2-|\delta \omega|^2 +\langle E(\omega), \omega\rangle .
$$
However, by Lemma~\ref{lem33} and (\ref{eq997}),  the righthand side of  the above equation
 should be positive, which is a contradiction. Hence, we may conclude that $\Omega_0 =\emptyset$, 
or $\omega$ is trivial on $M$.
\end{proof}

\section{V-static spaces with $\omega=0$}

Let $(M,g,f, \kappa)$ be a V-static space with $\partial M= f^{-1}(0)$ and $\kappa \ge 0$ satisfying
$\omega =df\wedge i_{\n f}z =0$. Then, it is easy to see that 
\be
z(\nabla f, X)=0\label{eqn2019-5-27-2}
\ee
for any vector $X$ orthogonal to $\nabla f$ by  plugging $(\nabla f, X)$ into $\omega$. Thus, we may write 
$$
i_{\nabla f}z =\a df,\quad \mbox{where} \,\, \a = z(N, N), \,\, 
N = \frac{\n f}{|\n f|}
$$
as a $1$-form.

In this section we will prove that there are no critical points of $f$ except at maximum points of 
$f$ in $M$, and either the maximal set $f^{-1}(a)$ for $a=\max_{x\in M} f(x)$ is totally geodesic, or
contains only a single point. In particular,  if $\kappa >0$, we will show $f^{-1}(a)$ is a single point and the level set $f^{-1}(t)$ is homeomorphic to ${\Bbb S}^{n-1}$ including the boundary $\partial M$. 

First, we note that  $|\nabla f |$ is constant on each level set of $f$, since
$$ \frac 12 X(|\nabla f|^2)=f\, z(X, \nabla f) -\frac {n\kappa+ sf}{n(n-1)}\langle X, \nabla f\rangle =0 $$
for any tangent vector $X$ to a level set $f^{-1}(t), t>0$. Also, on $\partial M$ we have
$$ \nabla f(|\nabla f|^2)= -\frac {2\kappa}{n-1}|\nabla f|^2<0$$
by Proposition~\ref{prop1}.

\begin{lem}\label{lem41}
 Let $(M, g,f, \ka)$ be a $V$-static space with $\partial M = f^{-1}(0)$ and $\ka \ge  0$.
 Assume that $\o = 0$. Then there are no critical points of $f$ except at the maximum points of $f$.
\end{lem}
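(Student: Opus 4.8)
The plan is to exploit the condition $\omega=0$, which by the discussion preceding the lemma gives us the crucial structural identity $i_{\nabla f}z=\alpha\,df$, meaning $\nabla f$ is an eigenvector of the traceless Ricci tensor $z$. This tells us that the Hessian equation (\ref{cem2}) becomes particularly tractable along the gradient flow of $f$. First I would let $p$ be any critical point of $f$ with $0<f(p)<a$, aiming for a contradiction. The key observation is that away from critical points the level sets $f^{-1}(t)$ are smooth hypersurfaces with unit normal $N=\nabla f/|\nabla f|$, and we have already established that $|\nabla f|$ is constant on each level set. I would analyze the behavior of $|\nabla f|$ as a function along the flow, using the eigenvector condition to compute $N(|\nabla f|^2)$ explicitly.

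The central computation is to evaluate the normal derivative of $|\nabla f|$. Feeding $X=\nabla f$ into (\ref{cem2}) and using $z(\nabla f,\nabla f)=\alpha|\nabla f|^2$, one obtains
\begin{equation*}
\tfrac12\,\nabla f(|\nabla f|^2)=f\,\alpha\,|\nabla f|^2-\frac{n\kappa+sf}{n(n-1)}\,|\nabla f|^2.
\end{equation*}
Since $|\nabla f|$ depends only on $f$ along the flow (being constant on level sets), I would treat $|\nabla f|^2$ as a function $\psi(t)$ of the value $t=f$, obtaining an ODE of the form $\psi'(t)\,|\nabla f|^2=2\bigl(t\,\alpha-\tfrac{n\kappa+st}{n(n-1)}\bigr)\psi(t)$, hence $\tfrac12\psi'(t)=t\,\alpha-\tfrac{n\kappa+st}{n(n-1)}$ wherever $|\nabla f|\neq0$. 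The idea is that if a critical point occurred at an interior level $t_0\in(0,a)$, then $|\nabla f|$ would vanish on the entire level set $f^{-1}(t_0)$ (since $|\nabla f|$ is constant there), yet for $t$ slightly less than $t_0$ the value $|\nabla f|$ is positive, so $\psi$ decreases to zero from the left; I would show this forces a sign condition incompatible with $f$ continuing to increase past $t_0$ toward its maximum $a$.

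The cleanest route is probably a connectedness/continuity argument on the set of regular values. Let $E=\{t\in(0,a):f^{-1}(t)\text{ contains no critical point}\}$; this is open, and on $E$ the function $t\mapsto|\nabla f|$ is smooth and positive. I would argue that $\nabla f$ cannot vanish at any interior level: if $|\nabla f|(t_0)=0$ for some $t_0<a$, then since $f$ attains values arbitrarily close to $a>t_0$, the gradient flow must somehow move past the critical level, but a critical point of $f$ at interior value forces $\nabla f=0$ on a whole hypersurface $f^{-1}(t_0)$, and by the eigenvector structure together with the constancy of $|\nabla f|$ on level sets, the region beyond $t_0$ would be disconnected from points where $f>t_0$ — contradicting that $a$ is attained. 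More carefully, I would use that near a regular level the flow is a diffeomorphism and $|\nabla f|$ extends continuously, so the first interior value $t_0$ where $|\nabla f|\to0$ would be the maximum value by the ODE analysis; hence $t_0=a$, meaning all critical points lie in $f^{-1}(a)$.

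The main obstacle I anticipate is ruling out a critical point at an \emph{interior} level cleanly: one must ensure that the function $|\nabla f|$, which is a priori only defined and smooth on the complement of the critical set, genuinely controls the geometry, and that a vanishing of $|\nabla f|$ at $t_0<a$ truly obstructs $f$ from climbing higher. The subtlety is that the coefficient $\alpha=z(N,N)$ is itself only $C^0$ at critical points and its behavior along the flow is not immediately controlled, so the ODE for $\psi$ must be handled on regular intervals and then patched; establishing that $f^{-1}(a)$ captures \emph{all} critical points, rather than merely showing interior critical points are isolated, is where the care is needed. I expect the eigenvector identity $i_{\nabla f}z=\alpha\,df$ together with the maximum principle applied to the trace equation (\ref{lap1}) to be the decisive tools in forcing the critical set to coincide with the top level set.
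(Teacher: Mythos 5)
Your preliminaries are correct and match the paper's setup: $\omega=0$ gives $i_{\nabla f}z=\alpha\,df$, hence $|\nabla f|$ is constant on level sets, and your radial derivative formula $\tfrac12\nabla f(|\nabla f|^2)=\bigl(f\alpha-\tfrac{n\kappa+sf}{n(n-1)}\bigr)|\nabla f|^2$ is the correct evaluation of (\ref{cem2}) on $(\nabla f,\nabla f)$. But the decisive step — ruling out a critical point at an interior level $t_0<a$ — is never actually proved, and the one-dimensional ODE mechanism you propose cannot prove it. Since $\alpha$ carries no sign or size information (as you note, it is only $C^0$ across the critical set), the ODE $\tfrac12\psi'(t)=t\,\alpha(t)-\tfrac{n\kappa+st}{n(n-1)}$ is perfectly compatible with $\psi$ decreasing to $0$ at $t_0$ and becoming positive again for $t>t_0$; nothing one-dimensional forbids this (compare $x\mapsto x^3$ on $\mathbb{R}$, whose gradient vanishes on a whole level set that the function nevertheless climbs past). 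Your claim that "the region beyond $t_0$ would be disconnected from points where $f>t_0$" is exactly what needs proof, and the fallback you suggest — the maximum principle applied to the trace equation (\ref{lap1}) — targets $f$ itself and says nothing about the structure of the critical set.

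What is missing is an \emph{elliptic} (full-dimensional) maximum principle applied to $|\nabla f|^2$, and this is precisely how the paper argues. Combining the Bochner formula with (\ref{cem2}) and (\ref{lap1}) (using that $s$ is constant), one gets, on $M\setminus\partial M$,
$$
\frac 12 \Delta |\nabla f|^2 -\frac 1{2f}\nabla f(|\nabla f|^2) - \frac {\kappa}{(n-1)f} |\nabla f|^2
=|Ddf|^2 \geq 0 ,
$$
an elliptic inequality whose zero-order coefficient $-\kappa/((n-1)f)$ is nonpositive because $\kappa\ge 0$ and $f>0$; by the maximum principle, $|\nabla f|^2$ admits no interior local maximum. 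Now if $p$ were a critical point with $f(p)=c<a$, constancy of $|\nabla f|$ on level sets forces $|\nabla f|\equiv 0$ on all of $f^{-1}(c)$; since $\nabla f$ also vanishes on the maximum set $f^{-1}(a)$, the function $|\nabla f|^2$ is positive somewhere in $\{c<f<a\}$ yet vanishes on the boundary of that region, so it attains an interior local maximum there — contradicting the inequality above. This Bochner-plus-maximum-principle step is the idea your proposal lacks, and without it (or an equivalent substitute) the argument does not close.
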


\begin{proof}
By the Bochner formula with (\ref{cem2}), we have
$$ \frac 12 \tr |\nabla f|^2 -\frac 1{2f}\nabla f(|\nabla f|^2) - \frac {\kappa}{(n-1)f} |\nabla f|^2 
=|Ddf|^2 \geq 0
$$
on the set $M\setminus \partial M$. Thus, by the maximum principle, there is no local maximum point of $|\nabla f|$ in the interior of $M$. 

Now, suppose that there is a critical point $p$ of $f$ with $f(p)=c<a=\max f$. Since $|\nabla f|$ is constant on each level set of $f$, $|\nabla f|=0$ on $f^{-1}(c)$. However, this implies that there should be a local maximum of $|\nabla f|^2$ in the set $\{x\in M\, \vert\, c <f(x) <a\}$, which is impossible by the above maximum principle.
\end{proof}

Lemma~\ref{lem41} shows that the maximal set $f^{-1}(a)$ is connected, and either
it is a single point or a hypersurface. In fact, if $f^{-1}(a)$ has at least two components, 
$f$ may have
 a critical point other than $f^{-1}(a)$. Furthermore, this property  shows that 
the boundary $\partial M$ has at most two connected components. In fact,  if it has more than
$3$ components, the potential function $f$ has a critical point other than $f^{-1}(a)$ or $f^{-1}(a)$
disconnected.
Lemma~\ref{lem41} also shows that if $\partial M$ is connected, then $f^{-1}(a)$ contains only a single point.

\begin{lem}\label{lem2021-3-3-2} 
Let $(M, g,f, \ka)$ be a $V$-static space  with 
 $\partial M = f^{-1}(0)$ and  $\ka \ge 0$ satisfying $\o = 0$. 
Suppose that $(M, g)$ has a positive scalar curvature. If
$\Sigma:= f^{-1}(a)$ is a hypersurface for $a = \max_M f$, then $\Sigma$ is totally geodesic.
\end{lem}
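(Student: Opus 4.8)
The plan is to obtain $\Sigma$ as the limit of the regular level sets $\Sigma_t := f^{-1}(t)$ as $t\to a^-$, to write their second fundamental forms through the $V$-static equation, and to reduce the totally-geodesic property to the vanishing on $\Sigma$ of the tangential part of $D_Nz$. First I would record the local structure near $\Sigma$. By Lemma~\ref{lem41} the only critical points of $f$ lie on $\Sigma$, so $\nabla f=0$ on $\Sigma$ while $\nabla f\neq 0$ on a punctured neighbourhood, and $N=\nabla f/|\nabla f|$ extends continuously to the inner unit normal $N_0$ of $\Sigma$. Since $\Sigma$ is the maximum set, the tangential Hessian of $f$ vanishes there; feeding this into (\ref{cem2}) with $\omega=0$ (so $i_{\nabla f}z=\alpha\,df$ by (\ref{eqn2019-5-27-2})) gives $a\,z|_{T\Sigma}=\frac{n\kappa+sa}{n(n-1)}\,g|_{T\Sigma}$, i.e. the tangential block of $z$ is pure trace on $\Sigma$, and taking traces yields $a\,\alpha_0=-\frac{n\kappa+sa}{n}$ with $\alpha_0=\alpha|_\Sigma$. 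Hence $Ddf(N,N)|_\Sigma=a\,\alpha_0-\frac{n\kappa+sa}{n(n-1)}=-\frac{n\kappa+sa}{n-1}$, which is strictly negative because $s>0$, $a>0$, $\kappa\ge 0$. Thus $\Sigma$ is a nondegenerate maximum in the normal direction and $|\nabla f|$ vanishes to exactly first order in the distance $\rho$ to $\Sigma$; this is the one place where positivity of the scalar curvature is used.

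Working in Fermi coordinates along $\Sigma$ and using (\ref{cem2}) together with $\omega=0$ (so the Hessian respects the splitting $TM=T\Sigma_t\oplus\mathbb{R}N$), I would write the shape operator of $\Sigma_t$ as
$$
\mathrm{II}_t(X,Y)=\frac{1}{|\nabla f|}\Big(f\,z(X,Y)-\frac{n\kappa+sf}{n(n-1)}\langle X,Y\rangle\Big),\qquad X,Y\in T\Sigma_t .
$$
By the first paragraph the numerator tends to $0$ on $\Sigma$, so $\mathrm{II}_\Sigma$ is a genuine $0/0$ limit; L'H\^opital along the normal geodesics, combined with the nondegeneracy $Ddf(N,N)|_\Sigma<0$, gives
$$
\mathrm{II}_\Sigma(X,Y)=-\frac{a(n-1)}{n\kappa+sa}\,(D_{N_0}z)(X,Y),\qquad X,Y\in T\Sigma .
$$
Hence $\Sigma$ is totally geodesic \emph{if and only if} the tangential part of $D_{N_0}z$ vanishes on $\Sigma$.

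It then remains to prove $(D_{N_0}z)(X,Y)=0$ for $X,Y\in T\Sigma$, and this is where the Cotton tensor enters. From $\omega=0$ and (\ref{eqn2021-3-17-1}) one has $\tilde i_{\nabla f}C=0$ on the interior, i.e. $C(\,\cdot\,,\cdot\,,\nabla f)=0$; since $C=d^Dz$, writing $C(N,X,Y)=(D_Nz)(X,Y)-(D_Xz)(N,Y)$ and expanding $(D_Xz)(N,Y)$ through $z(N,\cdot)=\alpha N^\flat$ and the Weingarten relation $D_XN=\mathrm{II}_t(X,\cdot)^\sharp$ expresses everything in terms of $z$, its square, and $\alpha$ on the slices $\Sigma_t$. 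Inserting the Weyl contribution via $fC=\tilde i_{\nabla f}\mathcal{W}-(n-1)T$ from (\ref{eq119}) and letting $t\to a^-$ should force the tangential block of $D_{N_0}z$ to vanish.

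I expect the main obstacle to be exactly this last step, because the frame $\{E_i,N\}$, the scalar $\alpha$, and the block decomposition of $z$ all degenerate on $\Sigma$ (where $N$ is merely continuous); consequently every identity must be carried to the correct order in $\rho$ and each limit separately justified, and the vanishing of $(D_{N_0}z)|_{T\Sigma}$ emerges only after combining the Cotton identity $\tilde i_{\nabla f}C=0$, the second Bianchi identity, and the Weyl term in (\ref{eq119}). This delicate higher-order bookkeeping near the degenerate locus $\{|\nabla f|=0\}$ is what makes the argument long and technical.
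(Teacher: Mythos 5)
Your first two steps are sound and essentially reproduce the paper's own reduction. The computation of $z$ on $\Sigma$ (tangential block pure trace, $z(\nu,\nu)=-(\kappa/a+s/n)<0$, vanishing tangential Hessian) is exactly the paper's first step, and your L'H\^opital formula
$$
\mathrm{II}_\Sigma(X,Y)=-\frac{a(n-1)}{n\kappa+sa}\,(D_{N_0}z)(X,Y),\qquad X,Y\in T\Sigma,
$$
is the same reduction the paper obtains by differentiating the Weingarten-type relation $|\nabla f|\,D_{\gamma'}N=f z(\gamma'^{\top},\cdot)-\frac{n\kappa+sf}{n(n-1)}\gamma'^{\top}$ in the normal direction and letting $t\to0^+$ (its equation (\ref{eq47})). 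So you have correctly identified that the lemma is equivalent to the vanishing of the tangential part of $D_{N_0}z$ on $\Sigma$; this is precisely the paper's ``Claim''.

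The genuine gap is that you never prove this vanishing, and the route you sketch would not deliver it. From $\omega=0$ and (\ref{eqn2021-3-17-1}) you only get $\tilde{i}_{\nabla f}C=0$, i.e.\ $C(X,Y,\nabla f)=0$ --- contraction in the \emph{third} slot. Combining this with skew-symmetry in the first two slots and the vanishing cyclic sum makes $C(\nabla f,X,Y)$ \emph{symmetric} in $X,Y$, but does not make it zero. Consequently, when you expand $(D_Nz)(X,Y)=C(N,X,Y)+(D_Xz)(N,Y)$, the term $C(N,X,Y)$ is exactly as unknown as the quantity you are trying to compute, so the identity is circular; and the remaining terms, rewritten via $i_{\nabla f}z=\alpha\,df$ and (\ref{cem2}), vanish on $\Sigma$ to the same order as $|\nabla f|$, so near the degenerate locus they yield another $0/0$ expression rather than a conclusion. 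The paper closes this step by a completely different and more elementary device: a \emph{second} use of the maximality of $\Sigma$, now along geodesics tangent to it. For a geodesic $\gamma$ with $\gamma(0)=p\in\Sigma$, $\gamma'(0)=e_1\in T_p\Sigma$, the function $\xi(t)=z(\gamma',\gamma')-\frac{n\kappa+s\varphi}{n(n-1)\varphi}$ (with $\varphi=f\circ\gamma$) satisfies $\xi\le0$ and $\xi(0)=0$ by the pointwise consequences (\ref{eq44})--(\ref{eq45}) of maximality, whence $\xi'(0)\le0$; extending $\gamma$ and $\xi$ to $(-\bar l,0]$ gives the reverse inequality, forcing $\xi'(0)=0$. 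Together with the observations that $|\gamma'^{\top}|$ attains its maximum and $\langle\gamma',N\rangle^2$ its minimum at $p$, this produces exactly the needed vanishing $\nu_p[z(\gamma'^{\top},E_1)]=0$, hence $\mathrm{II}(e_1,e_1)=0$. Without this (or some substitute) argument for the key vanishing, your proposal establishes the reduction but not the lemma.
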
 
\begin{proof}
By Lemma~\ref{lem41}, the boundary $\partial M$  has exactly two connected components.
First, we will show that $Ddf|_{\Sigma} = 0$.
For a sufficiently small $\epsilon >0$, $f^{-1}(a-\epsilon)$ has two connected components 
$\Sigma_{\epsilon}^+$ and $\Sigma_{\epsilon}^-$. 
Let $\nu$ be a unit normal vector field on $\Sigma=f^{-1}(a)$. On a tubular neighborhood of 
$\Sigma$, $\nu$ can be extended smoothly to a vector field $\tilde{\nu}$ such that 
$\tilde{\nu}\vert_{\Sigma}=\nu$ with 
$\tilde{\nu}\vert_{\Sigma_{\epsilon}^+} =N=\frac {\nabla f}{|\nabla f|}$ 
and $\tilde{\nu}\vert_{\Sigma_{\epsilon}^-}=-N=-\frac {\nabla f}{|\nabla f|}$.

The Laplacian of $f$ on $f^{-1}(a-\epsilon)=\Sigma_{\epsilon}^-\cup \Sigma_{\epsilon}^+$ is given by
\be 
\tr f =\tr ' f+Ddf(\tilde \nu, \tilde \nu)+m\langle \tilde \nu, \nabla f\rangle, \label{eq41}
\ee
where $\tr '$  and $m$ denote the intrinsic Laplacian and the mean curvature of $f^{-1}(a-\epsilon)$, respectively. Since $\nabla f=0$ at $\Sigma$ and $\tr 'f=0$ in (\ref{eq41}), 
by letting $\epsilon \to 0$, we have
\be 
\tr f = Ddf(\nu,\nu) \label{eq42}
\ee
on $\Sigma$. It is clear that $m$ is bounded on $\Sigma$. Thus, 
\bea
-\frac {n\kappa +sa}{n-1}=Ddf(\nu, \nu) =az(\nu, \nu)-\frac {n\kappa +sa}{n(n-1)},
\label{eqn2021-3-8-1}
\eea
implying that 
\be z(\nu, \nu)= -\left(\frac {\kappa}{a} +\frac{s}{n}\right) < 0.  \label{eq43}
\ee
Now let $p \in \Sigma$ be a point and choose an orthonormal basis $\{e_i\}_{i=1}^n$ at
 $p$ with $e_n=\nu$. Since $\Sigma$ is the maximum set of $f$, we have
$$ Ddf_p(e_i, e_i) = az_p(e_i, e_i)-\frac {n\kappa +sa}{n(n-1)}\leq 0
$$
for $1\leq i\leq n-1$, and so 
$$ z_p(e_i, e_i) \leq \frac {n\kappa +sa}{n(n-1)a} = -\frac 1{n-1}z_p(\nu, \nu).
$$
Since $\sum_{i=1}^{n-1}z_p(e_i, e_i)=-z_p(\nu, \nu)$, this  implies that 
\be 
z_p(e_i, e_i) =\frac {n\kappa+sa}{n(n-1)a} > 0\label{eq44}
\ee
and so
\be 
Ddf_p(e_i, e_i)=0\label{eq45}
\ee 
on $\Sigma$ for each $i,  1\leq i\leq n-1$. 

Second, we will show that $\Sigma$ is totally geodesic.
Since $z(\nu, X)=0$ for $X$ orthogonal to $\nu$ at $p\in \Sigma$, we may take the previously mentioned orthonormal basis $\{e_i\}_{i=1}^n$ so that $\{e_i\}_{i=1}^{n-1}$ diagonalize $z$ at $p$. 
Fix an $i (i=1,2,\cdots,n-1)$, say $i=1$, and  let $\gamma : [0,l)\to M$ be a unit speed geodesic 
such that $\gamma(0)=p$, $\gamma'(0)=e_1$ for some $l>0$. For $N(t):=N\circ \gamma (t)$, we have
\bea
D_{\gamma'} N&=&D_{\gamma'}\left( \frac {\nabla f}{|\nabla f|}\right)
= \gamma'\left( \frac 1{|\nabla f|}\right) \nabla f + \frac 1{|\nabla f|} Ddf(\gamma ' , \cdot)\\
&=&
\langle \gamma', N\rangle N \left( \frac 1{|\nabla f|}\right) \nabla f 
+ \frac 1{|\nabla f|} \left( fz(\gamma', \cdot)-\frac {n\kappa +sf}{n(n-1)} \gamma '\right).
\eea
Note that
$$
N\left(\frac 1{|\nabla f|}\right)=-\frac 1{|\nabla f|^2} \left( f\a -\frac {n\kappa +sf}{n(n-1)}\right).
$$
From the decomposition of $\gamma'$, we have
\be \gamma'= \langle \gamma', N\rangle N +\gamma'^{\top} = \gamma'^{\bot}+\gamma'^{\top}, \label{decomp}\ee
where $\gamma'^{\top}$ is the tangential component of $\gamma'$ to $f^{-1}(\gamma(t))$, we have
$$
|\nabla f| D_{\gamma'}N=  fz(\gamma'^{\top}, \cdot) - \frac {n\kappa +sf}{n(n-1)} \gamma'^{\top}.
$$
By taking the derivative in the direction $N$, we have
\bea
\lefteqn{Ddf(N,N)D_{\gamma'}N+|\nabla f|D_ND_{\gamma'}N}\\
&=& |\nabla f| z(\gamma'^{\top}, \cdot ) +f D_N(z(\gamma'^{\top}, \cdot))-\frac {s}{n(n-1)}|\nabla f| \gamma'^{\top}-\frac {n\kappa +sf}{n(n-1)}D_N \gamma'^{\top}.
\eea
Taking $t$ tends to $0+$, we have
\be 
-\frac {n\kappa +sa}{n-1} D_{e_1}\nu= a D_{\nu}(z(\gamma'^{\top}, \cdot))\bigg|_p
 -\frac {n\kappa +sa}{n(n-1)}D_{\nu} \gamma'^{\top}\bigg|_p. \label{eq47}
\ee

\vskip .5pc
\noindent
{\bf Claim:}  $\langle D_{\nu}(z(\gamma'^{\top}, \cdot)), e_1\rangle=0$ at the point $p$. 

\vskip .5pc
{\sc Proof of Claim.}
Let $\varphi (t)=f\circ \gamma(t)$. Note that $\varphi '(0)=df_p (\gamma'(0))=0$,  and by (\ref{eq45}) we have
$$
\varphi ''(0)=Ddf(\gamma'(0), \gamma'(0))=Ddf_p(e_1, e_1)=0.
$$
Since $\Sigma$ is a the maximum set of $f$,  $\varphi ''(t)\le 0$, or
$$ 
\varphi''(t) =Ddf(\gamma'(t), \gamma'(t))=\varphi(t) z(\gamma'(t), \gamma'(t))
-\frac {n\kappa +s\varphi(t)}{n(n-1)}\leq 0
$$
for a sufficiently small $t>0$. By (\ref{eq44}) $$ z(\gamma'(t), \gamma'(t))\geq 0$$ for a sufficiently small $t>0$, implying that
$$ 0\leq z(\gamma'(t), \gamma'(t))\leq \frac {n\kappa+s\varphi(t)}{n(n-1)\varphi(t)}.
$$
Defining  $\xi(t)=z(\gamma'(t), \gamma'(t))-  \frac {n\kappa+s\varphi(t)}{n(n-1)\varphi(t)}$, 
we have  $\xi(t)\leq 0$ with $\xi(0)=0$. Therefore,
$$ \xi'(0) =\frac d{dt}\bigg\vert_{t=0} z(\gamma'(t), \gamma'(t))  \leq 0.$$
Considering smooth extensions of $\gamma$ and $\xi$ to the interval $(-\bar{l}, 0]$, respectively, we have $ \frac d{dt}\vert_{t=0}z(\gamma'(t), \gamma'(t))  \geq 0$,
implying that 
$$ 
\frac d{dt}\bigg\vert_{t=0}z(\gamma'(t), \gamma'(t)) = 0.
$$
In particular, 
\be 
\nu_p (z(\gamma'(t), \gamma'(t)))\vert_{t=0}=0.\label{eq46}
\ee
 Let $\{E_k\}_{k=1}^n$ be an orthonormal frame extending $e_i$ near $p$ with $E_n=N$ and $E_1 =\frac {\gamma'^{\top}}{|\gamma'^{\top}|}$. Since $z(\gamma'^{\top}, \gamma'^{\bot})=0$, it follows from (\ref{decomp})  and (\ref{eq46}) that,  at $p$ we have
\bea
0&=& \nu_p[z(\gamma', \gamma')]= \nu_p[z(\gamma'^{\top}, \gamma'^{\top})+z(\gamma'^{\bot}, \gamma'^{\bot})]\\
&=& \nu_p[|\gamma'^{\top}|z(\gamma'^{\top}, E_i)]+\nu_p [ z(\gamma'^{\bot}, \gamma'^{\bot})]\\
&=& \nu_p(|\gamma'^{\top}|)z(\gamma'^{\top},E_i)+|\gamma'^{\top}|\nu_p[z(\gamma'^{\top},E_i)]+\nu_p [\langle \gamma', N\rangle^2 \a].
\eea
Since $|\gamma'^{\top}|\leq 1$ attains its maximum at $p$ and $\langle \gamma', N\rangle ^2$ attains its minimum $0$ at $p$, the first and the last term vanish. Thus, 
\be \nu_p[z(\gamma'^{\top},E_i)]=0.\label{eq49}\ee
On the other hand, since
\bea
D_N(z(\gamma'^{\top}, \cdot))&=&\sum_{j=1}^{n-1}D_N[z(\gamma'^{\top}, E_j)E_j]\\
&=& \sum_{j=1}^{n-1} N[z(\gamma'^{\top}, E_j)]E_j +\sum_{j=1}^{n-1} z(\gamma'^{\top}, E_j)D_NE_j,
\eea
we have
$$
\langle D_N[z(\gamma'^{\top}, \cdot)], E_1\rangle=N[z(\gamma'^{\top}, E_1)]+ \sum_{j=1}^{n-1} z(\gamma'^{\top}, E_j)\langle D_NE_j, E_1\rangle.
$$
As $t$ tends to $0$, we have
$$ \langle D_{\nu}[z(\gamma'^{\top}, \cdot)], e_1\rangle=\nu_p[z(\gamma'^{\top}, E_1)]+z(e_1, e_1)\langle D_{\nu}E_1, e_1\rangle =\nu_p[z(\gamma'^{\top}, E_1)] .
$$
Hence, our claim follows from (\ref{eq49}). \hfill$\Box$
\vskip .5pc
Therefore, by (\ref{eq47}) and {\bf  claim} above, 
$$
-\frac {n\kappa +sa}{n(n-1)}\, {\rm II}(e_1, e_1)=a \langle D_{\nu}(z(\gamma'^{\top}, \cdot)), e_1\rangle -\frac {n\kappa +sa}{n(n-1)}\langle D_{\nu} \gamma'^{\top}, e_1\rangle=0
$$
or ${\rm II}(e_1, e_1)=0$. Since $e_1$ is arbitrary, we may conclude that $\Sigma$ is totally geodesic.
\end{proof}

\vskip .5pc
Now, we consider the structure of the set of maximum points of $f$. We have the following result.

\begin{thm}\label{str001} 
Let $(M, g,f, \ka)$ be a $V$-static space on a compact manifold $M$ with 
boundary $\partial M = f^{-1}(0)$ and $\ka = 0$ satisfying $\o = 0$. Let $a = \max_M f$
and suppose $(M, g)$ has a positive scalar curvature.
Then we have the following.
\begin{itemize}
\item[(1)] if $\partial M$ is connected, then $ M$ is isometric to ${\Bbb S}^{n}_+$.
\item[(2)] if $\partial M$ is disconnected, then  $M$ is isometric to $I \times {\Bbb S}^{n-1}$,
the product of an interval with a standard sphere, up to finite cover.
\end{itemize}
\end{thm}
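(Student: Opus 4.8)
The plan is to reduce the classification to the known classification of \emph{closed} vacuum static spaces with PIC \cite{hy2} by doubling $M$ across its boundary. First I would record the geometric picture produced by the preceding lemmas. Since $\omega = 0$, Lemma~\ref{lem41} shows that $f$ has no interior critical point other than on the maximal set $f^{-1}(a)$; together with the remark following it, this forces $\partial M$ to have at most two components and splits the analysis into two scenarios. If $\partial M$ is connected, then $f^{-1}(a)$ is a single point $p_0$, the gradient flow of $f$ fibres the complement of $p_0$ by regular level sets, and $M$ is a topological disk. If $\partial M$ is disconnected, it has exactly two components, $\Sigma := f^{-1}(a)$ is a hypersurface which is totally geodesic by Lemma~\ref{lem2021-3-3-2}, and the gradient flow identifies $M$ with $\Sigma \times [-1,1]$, with $\Sigma$ sitting at the centre and the two boundary components at the ends. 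In both cases $\partial M$ is totally geodesic by Lemma~\ref{lem202-3-3-4}, which is exactly the datum needed to double.

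Next I would form the double $\hat M = M \cup_{\partial M} M$, with reflection involution $\sigma$, equip it with the reflected metric $\hat g$, and extend the potential by odd reflection, $\hat f \circ \sigma = -\hat f$. I claim $(\hat M, \hat g, \hat f)$ is a closed vacuum static space ($\kappa = 0$) carrying PIC. Preservation of the static equation is immediate from its linearity in $f$: since $s_g'^*$ is linear, $s_g'^*(-f) = -s_g'^* f = 0$, so the equation holds on the reflected copy and, by continuity, across the gluing locus. That $\hat M$ has PIC on the locus follows because $\sigma$ is an isometry, so the curvature there agrees with the curvature of $(M,g)$ along $\partial M$, which satisfies the strict PIC inequality by hypothesis.

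The identification is then read off from \cite{hy2}, which asserts that a closed vacuum static space with PIC is, up to finite cover, isometric to ${\Bbb S}^n$ or to ${\Bbb S}^1 \times {\Bbb S}^{n-1}$. In case (1), $\hat M$ is two disks glued along an ${\Bbb S}^{n-1}$, hence simply connected; it therefore cannot be finitely covered by ${\Bbb S}^1\times{\Bbb S}^{n-1}$, whose first homology is infinite, so $\hat M$ is isometric to ${\Bbb S}^n$, and $M$ is the closed hemisphere fixed by $\sigma$, namely $M = {\Bbb S}^n_+$. In case (2), $\hat M \cong \Sigma \times {\Bbb S}^1$ has infinite first homology, so it is not finitely covered by ${\Bbb S}^n$; hence, up to finite cover, $\hat M$ is ${\Bbb S}^1 \times {\Bbb S}^{n-1}$. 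This forces $\Sigma = {\Bbb S}^{n-1}$ with its round metric together with the product structure, whence $M = I \times {\Bbb S}^{n-1}$ up to finite cover, as claimed.

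The main obstacle is verifying that the doubled object is genuinely smooth across the gluing hypersurface, which is what legitimizes the doubling. Two points need care. First, the odd extension $\hat f$ is smooth only if the even-order normal derivatives of $f$ vanish on $\partial M$: the zeroth vanishes since $f = 0$ there, and $Ddf(\nu,\nu) = f z(\nu,\nu) - \frac{sf}{n(n-1)} = 0$ on $\partial M$ disposes of the second, with the higher even orders following inductively by differentiating the static equation repeatedly in the normal direction and using the vanishing of the lower even jets. Second, a totally geodesic boundary a priori yields only a $C^2$ reflected metric $\hat g$, so $C^\infty$ (indeed analytic) regularity through the locus must be recovered from elliptic regularity: in harmonic coordinates the pair $(\hat g, \hat f)$ solves an elliptic, analytic-hypoelliptic system — using that $g$ and $f$ are analytic in suitable coordinates, as noted in the introduction — which upgrades $\hat g$ to a smooth metric across the locus. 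Once this regularity is secured, the reduction to \cite{hy2} completes the argument.
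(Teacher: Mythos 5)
Your proposal is correct and follows essentially the same route as the paper: double $M$ across its totally geodesic boundary (Lemma~\ref{lem202-3-3-4}), observe that the double is a closed vacuum static space with PIC, and invoke the classification of \cite{hy2} to identify $\hat M$ with ${\Bbb S}^n$ or ${\Bbb S}^1\times{\Bbb S}^{n-1}$ up to finite cover. The only differences are minor refinements within that same strategy: you resolve the finite-cover ambiguity in case (1) via simple connectivity of the double where the paper invokes Obata's theorem, and you supply the smoothness-of-the-doubling details (odd extension of $f$, regularity of the reflected metric) that the paper leaves implicit.
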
 
\begin{proof}
 It is obvious that if $\partial M$ is connected, then
$f^{-1}(a)$ is a single point, and $\partial M$ is totally geodesic by Lemma~\ref{lem202-3-3-4}. 
Considering  two copies of $M$, and gluing the  boundaries, we can obtain
a compact smooth manifold $\hat M$ without boundary which is a vacuum static space and has PIC. 
Applying the main result in \cite{hy2}, up to finite cover, $\hat M$ is isometric to ${\Bbb S}^n$.
In particular, since $\hat M$ is Einstein and $\kappa = 0$, it is isometric to  ${\Bbb S}^n$ 
by Obata's result \cite{oba}. Thus, $(M, g)$ is isometric to ${\Bbb S}^{n}_+$
because the boundary $\partial M$ is totally geodesic.

Suppose $\partial M$ is disconnected so that $\Sigma:=f^{-1}(a)$ is a 
totally geodesic hypersurface in $M$, and $\partial M$ has exactly two connected components
$\Gamma_1$ and $\Gamma_2$. 
By Lemma~\ref{lem202-3-3-4}, each component $\Gamma_i (i=1,2)$ of $\partial M$ is totally geodesic. 
As in the case that $\partial M$ is connected, considering  two copies of $M$, and gluing the corresponding boundaries, we can obtain
a compact smooth manifold $\hat M$ without bounday which is a vacuum static space having PIC. 
By the main result in \cite{hy2}, up to finite cover, $\hat M$ is isometric to ${\Bbb S}^1 \times
{\Bbb S}^{n-1}$, and  so both $M$ is isometric to $I \times {\Bbb S}^{n-1}$.
the product of an interval with a standard sphere.
\end{proof}

\begin{thm}\label{str02} 
Let $(M, g,f, \ka)$ be a $V$-static space on a compact manifold $M$ with 
boundary $\partial M = f^{-1}(0)$ and $\ka > 0$ satisfying $\o = 0$. 
Let $a = \max_M f$ and suppose $(M, g)$ has a positive scalar curvature.
Then, $f^{-1}(a)$ contains only  a single point. In particular, $\partial M$ is connected, and
every level set $f^{-1}(t), 0< t< a$, is homeomorphic to a sphere ${\Bbb S}^{n-1}$ and $M$ is contractible.
\end{thm}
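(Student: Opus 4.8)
The plan is to reduce the whole statement to the single assertion that $f^{-1}(a)$ is one point, since the topological conclusions then follow from the structure already available. Indeed, by Lemma~\ref{lem41} the only critical point of $f$ is its maximum, and the hypothesis $\omega=0$ gives $i_{\nabla f}z=\alpha\,df$; substituting this into (\ref{cem2}) shows that $D_{\nabla f}\nabla f$ is a multiple of $\nabla f$, so the integral curves of $N=\nabla f/|\nabla f|$ are geodesics. Together with the fact that $|\nabla f|$ is constant on level sets, this yields a metric splitting $g=dt^2+g_t$ with $f=f(t)$ on $M\setminus f^{-1}(a)$. Granting that the slices are umbilic, equivalently $T=0$ by (\ref{eqn2021-3-8-2}), one has $g_t=\phi(t)^2 g_\Sigma$; in the single–point case $\phi(0)=0$, smoothness of $g$ at the tip forces the link to be the unit round ${\Bbb S}^{n-1}$, and hence every level $f^{-1}(t)$ ($0<t<a$, including $\partial M$) is a round sphere, $\partial M$ is connected, and $M$ is a disk and so contractible.

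So the whole problem is to rule out the alternative. By Lemma~\ref{lem41} and the discussion after it, if $\partial M$ is disconnected then $\Sigma:=f^{-1}(a)$ is a hypersurface and, by Lemma~\ref{lem2021-3-3-2}, it is totally geodesic, with $f$ even in the normal direction (so reflection across $\Sigma$ is an isometry fixing $f$). I would then set up the first integral coming from (\ref{cem2}) and $\omega=0$: namely $\nabla|\nabla f|^2=2\bigl(f\alpha-\tfrac{n\kappa+sf}{n(n-1)}\bigr)\nabla f$, so that $|\nabla f|^2=P(f)$ for a single function $P$ on $[0,a]$ with $P(a)=0$. Combining the tangential part of (\ref{cem2}) (which writes the second fundamental form of the slices through $P$ and $f$), the constancy of $s$, and the Einstein condition induced on $\Sigma$ by (\ref{cem2}), I would reduce the system to one autonomous equation for $P$, coupled to an expression for the fiber radius $\phi$. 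The two geometric possibilities are recorded in the boundary datum $P'(a)=2f''|_{\max}$: at an isolated maximum (\ref{cem2}) gives $f''=-\frac{n\kappa+sa}{n(n-1)}$, whereas on a hypersurface $\Delta f=Ddf(\nu,\nu)$ in (\ref{eq42}) together with (\ref{lap1}) forces $f''|_{\Sigma}=-\frac{n\kappa+sa}{n-1}$, larger in magnitude by the factor $n$.

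The main obstacle, and the crux of the argument, is to show that this hypersurface datum is incompatible with $\kappa>0$. From the Raychaudhuri equation one computes $\phi'(0)=0$ and, using (\ref{eq43}), $\phi''(0)=\frac{\kappa}{(n-1)a}>0$, so for $\kappa>0$ the slices strictly expand away from the totally geodesic waist. I expect the reduced ODE to propagate this and to force the fiber radius to collapse, $\phi\to0$, before $f$ reaches $0$ (so that $\Sigma$ is in fact a point, contradicting the hypersurface hypothesis), or else to drive the fiber Einstein constant nonpositive, contradicting $s^{\Sigma}=s+\frac{2\kappa}{a}>0$, which follows from the Gauss equation on the totally geodesic $\Sigma$ and $\mathrm{Ric}(N,N)=-\kappa/a$. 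The delicate points here are exactly the umbilicity ($T=0$) used to write $g_t=\phi^2 g_\Sigma$ and the monotonicity in the $P$–equation; for $\kappa=0$ the analogous data admit the product solution, consistent with the disconnected case of Theorem~\ref{str001}, so the sign of $\kappa$ must enter decisively.

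As a cleaner endgame to run in parallel, I would use the identity $\int_M f|z|^2=-\sum_i c_i\int_{\Gamma_i}\alpha$, obtained from $\delta z=0$ (valid since $s$ is constant) and Green's formula, where $c_i=|\nabla f|$ on the component $\Gamma_i$. If the ODE analysis yields $\alpha=z(N,N)\ge0$ on $\partial M$, then the left side being nonnegative forces $z\equiv0$, so $M$ is Einstein; a V-static Einstein space satisfies $Ddf=(\text{scalar})\,g$, whence by Obata's theorem \cite{oba} $M$ is a geodesic ball with a single maximum, again excluding the hypersurface case. Thus the sign of $\alpha$ on $\partial M$, which through $P''(0)$ is governed by the same reduced equation, is the quantity I would work hardest to control, and I regard establishing it (equivalently, the fiber collapse for $\kappa>0$) as the principal difficulty.
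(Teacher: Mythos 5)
There is a genuine gap at exactly the point you flag as the principal difficulty: the exclusion of the hypersurface case for $\kappa>0$ is never actually carried out. Your reduction to a first integral $|\nabla f|^2=P(f)$ and a warped-product ODE is sound as a framework (and the umbilicity $T=0$ you ``grant'' is indeed recoverable from $\omega=0$ alone, via the conformal-Killing structure of Lemmas~\ref{lem51} and~\ref{lem2020-6-30-1}, so no circularity there), but the decisive step is left as an expectation: you say you ``expect the reduced ODE to propagate'' the initial expansion $\phi''(0)=\kappa/((n-1)a)>0$ into fiber collapse or a nonpositive fiber Einstein constant, with no monotonicity or comparison argument supplied. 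This cannot be waved through: as you yourself note, for $\kappa=0$ the identical reduced system admits the cylindrical solution with a hypersurface waist, so nothing in the structure of the ODE alone forbids the waist; the sign of $\kappa$ must enter through a concrete mechanism, and an initially positive $\phi''$ does not by itself force $\phi\to 0$ before $f$ reaches $0$. Your fallback endgame, the identity $\int_M f|z|^2=-\sum_i c_i\int_{\Gamma_i}\alpha$ (which is correct, using $\delta z=0$ and tracelessness of $z$), likewise hinges on the sign $\alpha\ge 0$ on $\partial M$, which you concede you cannot establish. So the proposal reduces the theorem to an open claim rather than proving it.

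The paper's mechanism for this step is entirely different and short, and it is worth contrasting: if $\Sigma=f^{-1}(a)$ is a hypersurface, it is totally geodesic by Lemma~\ref{lem2021-3-3-2}, and by (\ref{eq43}) one has $r(\nu,\nu)=-\kappa/a<0$ on $\Sigma$. As the maximum set of $f$, $\Sigma$ satisfies the stability inequality $\int_\Sigma\bigl(|\nabla\phi|^2-|{\rm II}|^2\phi^2-r(\nu,\nu)\phi^2\bigr)\ge 0$, so by Fischer--Colbrie--Schoen \cite{f-s} there is a positive function $\varphi$ on the closed manifold $\Sigma$ with $\Delta'\varphi+r(\nu,\nu)\varphi=0$, i.e.\ $\Delta'\varphi=(\kappa/a)\varphi>0$; integrating over the compact $\Sigma$ (or invoking the maximum principle) gives a contradiction. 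This is precisely where $\kappa>0$ enters decisively --- note the argument fails for $\kappa=0$, consistent with the cylinder --- and it bypasses all ODE analysis. Once $\Sigma$ is a point, the remaining conclusions follow as in your opening paragraph but more cheaply: Lemma~\ref{lem41} (no other critical points) plus the flow of $\nabla f$ from a small geodesic sphere about the maximum and the isotopy lemma give that each $f^{-1}(t)$ is homeomorphic to ${\Bbb S}^{n-1}$, that $\partial M$ is connected, and that $M$ is contractible; no smoothness-at-the-tip or roundness-of-the-link argument (and hence no $T=0$) is needed for the topological statements. To repair your proof, replace the unproved ODE propagation by this stability argument; the rest of your outline then goes through.
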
 
\begin{proof}
Recall that the maximum set $\Sigma:= f^{-1}(a)$ is connected by Lemma~\ref{lem41}.
Assume $\kappa >0$ and $\Sigma$ is a hypersurface so that
$\Sigma$ is totally geodesic  by Lemma~\ref{lem2021-3-3-2}.
By (\ref{eq43}), we have 
$$
 r(\nu, \nu)= -\frac {\kappa}a <0
$$
on the set $\Sigma$. Therefore, on $\Sigma$ the stability operator becomes
$$
\int_{\Sigma}\left( |\nabla \phi|^2 -|{\rm II}|^2\phi^2 -r(\nu, \nu)\phi^2\right) \geq 0
$$
for any smooth function $\phi$ on $\Sigma$. Then, by Fredholm alternative (c.f. \cite{f-s}, Theorem 1) there exists a positive function $\varphi >0$ on $\Sigma$ such that 
$$\tr '\varphi +r(\nu, \nu)\varphi =0.$$
Here, $\tr '$ is the intrinsic Laplacian on $\Sigma$. However, since $\tr '\varphi =\frac {\kappa}a\varphi>0$ and $\Sigma$ is compact, by the maximum principle $\varphi $ should be constant, which is impossible. This contradiction implies that $\Sigma$ cannot be a hypersurface. 
Since $f^{-1}(a)$ contains only  a single point, by considering a small geodesic ball at $a$ and applying Lemma~\ref{lem41}, we can see that  every level set
$f^{-1}(t), 0< t< a$, is homeomorphic to a sphere ${\Bbb S}^{n-1}$. Also, by Isotopy lemma, $M$ is contractible.

\end{proof}

\section{Proof of Theorem~\ref{thm11}}
In this section we prove Theorem~\ref{thm11}. To prove Theorem~\ref{thm11}, it suffices to prove that $T=0$ due to Lemma~\ref{lem23}. 

Let $\bar{M}=M\setminus f^{-1}(a)$ with $a=\max_M f$. Fix a level set $f^{-1}(c)$ with $0 \le c < a$ and let $g'$ be the restriction of $g$ to $f^{-1}(c)$.
Consider a warped product metric $\bar{g}$ on $\bar{M}$ given by
\be 
\bar{g}= \frac {df}{|\nabla f|} \otimes \frac {df}{|\nabla f|} +|\nabla f|^2 g'.\label{eq51}
\ee
By Theorem~\ref{lem41}, $\bar{g}$ is smooth on $M$ except at the maximum point of $f$.  
As  Lemma 4.1 in \cite{hy1}, $\nabla f$ is a conformal Killing vector field with respect to $\bar{g}$. 

\begin{lem} \label{lem51} 
For  $(\bar{M},\bar{g})$, we have  
$$ \frac 12 {\mathcal L}_{\nabla f}\bar{g}=N(|\nabla f|)\bar{g}= \frac 1n (\bar{\tr}f)\bar{g}.$$
\end{lem}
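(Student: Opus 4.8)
The plan is to verify the chain of equalities in Lemma~\ref{lem51} by direct computation against the warped product structure of $\bar g$. The claim has three pieces: first that $\nabla f$ acts on $\bar g$ as a conformal Killing field with conformal factor $N(|\nabla f|)$; second that this factor equals $\frac 1n(\bar\Delta f)$. Since the excerpt already asserts (following Lemma~4.1 of \cite{hy1}) that $\nabla f$ is conformal Killing for $\bar g$, the first equality $\frac 12\mathcal{L}_{\nabla f}\bar g = \lambda\,\bar g$ holds for \emph{some} function $\lambda$, and the real content is identifying $\lambda$.

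\textbf{Identifying the conformal factor.}
First I would take the trace of the conformal Killing equation with respect to $\bar g$. Since $\bar g$ is an $n$-dimensional metric, $\bar g^{ab}(\mathcal{L}_{\nabla f}\bar g)_{ab} = 2\lambda n$, while on the other hand $\bar g^{ab}(\mathcal{L}_{\nabla f}\bar g)_{ab} = 2\,\bar\delta^{\flat}(\nabla f)$-type expression that recognizes as $2\bar\Delta f$ up to sign conventions. This immediately forces $\lambda = \frac 1n\bar\Delta f$, giving the rightmost equality essentially for free once the conformal Killing property is granted. So the genuine work is to show $\lambda = N(|\nabla f|)$, i.e.\ to evaluate $\mathcal{L}_{\nabla f}\bar g$ directly in the adapted frame.

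\textbf{The direct frame computation.}
To pin down $\lambda = N(|\nabla f|)$ I would split into the normal and tangential directions of the level sets, using the decomposition $\bar g = N^\flat\otimes N^\flat + |\nabla f|^2 g'$ implicit in (\ref{eq51}), where $N^\flat = df/|\nabla f|$. In the normal direction, I compute $(\mathcal{L}_{\nabla f}\bar g)(N,N)$ using that $\nabla f = |\nabla f|\,N$ and that $|\nabla f|$ is constant on each level set (established just before Lemma~\ref{lem41}); the key input is $\nabla f(|\nabla f|^2) = 2|\nabla f|^2\,N(|\nabla f|)$, which reduces the normal component of $\frac12\mathcal{L}_{\nabla f}\bar g$ to $N(|\nabla f|)\,\bar g(N,N)$. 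In the tangential directions, I use that the warping factor is $|\nabla f|^2$ and that the flow of $\nabla f$ moves between level sets, so that $\frac12\mathcal{L}_{\nabla f}(|\nabla f|^2 g')$ picks up exactly the derivative $\nabla f(\tfrac12\log|\nabla f|^2)\cdot|\nabla f|^2 g' = |\nabla f|\,N(|\nabla f|)\,|\nabla f|^2 g'/|\nabla f|$, which again collapses to $N(|\nabla f|)$ times the tangential part of $\bar g$. Here $\omega = 0$, equivalently (\ref{eqn2019-5-27-2}), is what guarantees no cross terms survive: the mixed components $(\mathcal{L}_{\nabla f}\bar g)(N, E_i)$ vanish precisely because $z(\nabla f, E_i)=0$ for $E_i$ tangent to the level set.

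\textbf{Main obstacle.}
The subtle point, and the step I expect to require the most care, is the tangential computation: one must show that the Lie derivative of the \emph{warped} tangential metric $|\nabla f|^2 g'$ along $\nabla f$ produces the same scalar factor $N(|\nabla f|)$ as the normal direction, which is not automatic and relies on how $g'$ is defined as a fixed restriction transported along the flow. The clean way to handle this is to observe that $\bar g$ is constructed so that $\nabla f$ is (up to reparametrization) the gradient flow with unit-type speed in the $\bar g$ metric, making $|\nabla f|$ itself the natural coordinate; then the conformal Killing property and the common factor fall out of the warped product geometry rather than from brute-force Christoffel symbol manipulation. I would lean on the already-cited Lemma~4.1 of \cite{hy1} to supply the conformal Killing structure and restrict my verification to matching the scalar factor via the two traces above.
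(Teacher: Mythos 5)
Your proposal is correct, and the main thing to know is that the paper itself gives no proof of Lemma~\ref{lem51}: the lemma is stated immediately after the remark that, as in Lemma~4.1 of \cite{hy1}, $\nabla f$ is a conformal Killing field for $\bar g$, so the entire content is deferred to that reference. Your computation is the natural way to supply the missing argument, and it matches the structure the paper itself invokes one lemma later, where the adapted coordinates (\ref{eqn2020-1-17}), namely $\bar g = (du^1)^2 + |\nabla f|^2\,\eta_{ij}\,du^i\otimes du^j$ with $du^1 = df/|\nabla f|$, are introduced: there $\nabla f = |\nabla f|\,\partial_{u^1}$, the warping factor $|\nabla f|$ depends on $u^1$ alone (this is exactly where $\omega = 0$ enters, through the constancy of $|\nabla f|$ on the level sets), and $\eta_{ij}$ is independent of $u^1$, so your normal, tangential and mixed computations reduce to one-line coordinate identities. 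Two points in your write-up should be made explicit rather than left as glosses. First, your trace argument identifies $\frac 1n\,\mathrm{tr}_{\bar g}\bigl(\tfrac 12 \mathcal{L}_{\nabla f}\bar g\bigr)$ with $\frac 1n \bar{\Delta} f$; this requires knowing that $\nabla f$ (the $g$-gradient) coincides with the $\bar g$-gradient of $f$, which is immediate from the definition of $\bar g$ since $\bar g(\nabla f, X) = \frac{df(\nabla f)}{|\nabla f|}\,\frac{df(X)}{|\nabla f|} = df(X)$ for every $X$, but it is the hinge on which the second equality turns and should be stated. Second, the ``main obstacle'' you flag --- showing $\mathcal{L}_{\nabla f}g' = 0$ for the transported tangential metric --- dissolves once the adapted coordinates are in place: $g'$ has no $du^1$-component and $\eta_{ij}$ does not depend on $u^1$, so $\mathcal{L}_{\nabla f}g' = |\nabla f|\,\partial_{u^1}(\eta_{ij})\,du^i\otimes du^j = 0$, and the tangential conformal factor $N(|\nabla f|)$ then comes solely from differentiating the warping function $|\nabla f|^2$, in agreement with the normal component. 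With those two remarks added, your argument is a complete and self-contained proof of the lemma, which is more than the paper provides.
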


\begin{lem}\label{lem2020-6-30-1}
Let $(M, g,f, \ka)$ be a $V$-static space  with $\partial M = f^{-1}(0)$ and $\ka \ge 0$ 
satisfying $\o = 0$. Let $a = \max_M f$. Suppose $(M, g)$ has a positive scalar curvature.
 Then level hypersurfaces given by $f$ are homothetic to each other except $f^{-1}(a)$.
\end{lem}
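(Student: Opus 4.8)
The plan is to exploit the warped product structure established in Lemma~\ref{lem51} together with the vanishing of $\omega$, which forces $i_{\nabla f}z = \alpha\, df$. The key geometric fact is that $\nabla f$ is a conformal Killing field for $\bar g$ whose conformal factor is $N(|\nabla f|)$, a function that is constant on each level set $f^{-1}(t)$ (since $|\nabla f|$ is constant on level sets, as noted before Lemma~\ref{lem41}, and the level-set value of $f$ determines everything). First I would set up the flow $\Phi_t$ of the unit normal field $N = \nabla f/|\nabla f|$, which maps level sets to level sets because $N$ is the gradient direction. The goal is to show that $\Phi_t$ pulls back the induced metric $g'$ on one level hypersurface to a constant multiple of the induced metric on another, i.e. the level hypersurfaces are homothetic.

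The main computation I would carry out is to track how the induced metric evolves along the normal flow. For tangent vectors $X, Y$ to a level set, the second fundamental form governs $\mathcal{L}_N g$ restricted to the level set, and Lemma~\ref{lem51} says that with respect to $\bar g$ the Lie derivative $\mathcal{L}_{\nabla f}\bar g$ is a pure conformal term, $2N(|\nabla f|)\bar g$. Unwinding the definition of $\bar g$ in \eqref{eq51} and the relation $\nabla f = |\nabla f| N$, this should translate into the statement that the induced metric on $f^{-1}(t)$ in the $\bar g$ geometry changes only by an overall scalar as $t$ varies. Concretely, I would show that the shape operator of each level hypersurface (with respect to $g$) is a scalar multiple of the identity on the tangent space, i.e. the level sets are umbilic, using \eqref{eqn2019-5-27-2} to control the mixed Ricci terms $z(\nabla f, X) = 0$ and the V-static equation \eqref{cem2} to express $Ddf$ restricted to a level set. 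Umbilicity of all level sets, combined with the constancy of $|\nabla f|$ along each level set, is exactly what yields that the flow acts by homotheties.

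More precisely, from \eqref{cem2} one has $Ddf(X,Y) = f z(X,Y) - \frac{n\kappa + sf}{n(n-1)} g(X,Y)$ for $X,Y$ tangent to a level set. The second fundamental form of $f^{-1}(t)$ is $\mathrm{II}(X,Y) = \frac{1}{|\nabla f|} Ddf(X,Y)$, so I need to understand the tangential part of $z$. I would argue that $\omega = 0$ forces the tangential restriction of $z$ to be a multiple of $g'$: indeed $i_{\nabla f}z = \alpha\, df$ kills the mixed components, and combining this with a trace computation and the structure forced by the warped product should pin down $z(X,Y) = \lambda(t)\, g(X,Y)$ on each level set. Once the level sets are totally umbilic with a normal-direction-only evolving conformal factor, the normal flow $\Phi_t$ followed by rescaling gives an isometry between any two level sets scaled appropriately, which is the homothety claim.

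The hard part will be verifying that the tangential Ricci (equivalently $z$) is genuinely pointwise proportional to the metric on each level set, rather than merely having the correct trace. The vanishing of $\omega$ gives the mixed components for free, but isolating the tangential block requires combining \eqref{eqn2019-5-27-2} with the conformal Killing property and possibly differentiating along $N$; I expect the cleanest route is to use Lemma~\ref{lem51} to see that $\mathcal{L}_{\nabla f}\bar g$ is pure trace, then read off from the definition of $\bar g$ that $\mathcal{L}_N(|\nabla f|^2 g')$ must itself be proportional to $|\nabla f|^2 g'$, which directly forces umbilicity and hence homothety. The positivity of the scalar curvature enters to guarantee, via the earlier lemmas, that $f$ has no critical points away from the maximum so that the flow is well-defined on all of $\bar M$ and the level sets foliate it smoothly.
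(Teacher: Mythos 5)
Your outline runs parallel to the paper's own proof: both arguments rest on Lemma~\ref{lem51} and the auxiliary metric $\bar g$ of \eqref{eq51}. The paper, however, concludes by applying the Ishihara--Tashiro structure theorem for concircular functions (\cite{tas2}, \cite{tas}) to $(\bar M,\bar g)$, obtaining coordinates $(u^1,\dots,u^n)$ with $du^1=df/|\nabla f|$ in which $\bar g=(du^1)^2+|\nabla f|^2\eta_{ij}(u^2,\dots,u^n)\,du^i\otimes du^j$, and then comparing with \eqref{eq51}; you propose to replace this by a direct Lie-derivative/normal-flow argument. That substitution is where your proof breaks down.

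The gap is concrete: you conflate the tangential part of $\bar g$ with the tangential part of $g$. In \eqref{eq51} the tensor $g'$ is the induced metric of the \emph{fixed} slice $f^{-1}(c)$, extended over $\bar M$ so as to be invariant under the flow of $N$. Consequently $\mathcal{L}_N\bigl(|\nabla f|^2 g'\bigr)=N\bigl(|\nabla f|^2\bigr)g'$ holds \emph{by construction}, using nothing but the constancy of $|\nabla f|$ on level sets; indeed Lemma~\ref{lem51} itself is automatic for any such warped ansatz, since writing $\nabla f=|\nabla f|N$ and using $\mathcal{L}_N g'=0$ gives $\tfrac12\mathcal{L}_{\nabla f}\bar g=N(|\nabla f|)\bar g$ without ever invoking the tensorial content of \eqref{cem2}. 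So the proportionality you want to ``read off'' expresses umbilicity of the level sets with respect to $\bar g$, which is true by fiat and carries no information about their second fundamental form with respect to $g$; the latter is $\tfrac{1}{|\nabla f|}Ddf$ restricted tangentially, hence by \eqref{cem2} is governed by the tangential block of $z$. Transferring umbilicity from $\bar g$ to $g$ requires knowing that $g-\frac{df}{|\nabla f|}\otimes\frac{df}{|\nabla f|}$ is a multiple of $g'$ with factor constant on each level set --- which is precisely the statement of the lemma, so your argument is circular. Your fallback claim that ``$\omega=0$ plus a trace computation'' pins down $z(X,Y)=\lambda(t)g(X,Y)$ on level sets fails for the same reason: \eqref{eqn2019-5-27-2} controls only the mixed components $z(\nabla f,X)$, while pure-trace form of the full tangential block of $z$ is equivalent, via \eqref{eqn2021-3-8-2}, to $T=0$ --- and that is the main conclusion of the whole section, derived in the paper only \emph{after} this lemma, in the proof of Theorem~\ref{thm111}. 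Any correct proof must input something beyond the conformal Killing property of $\nabla f$ for $\bar g$; the paper does this through the concircular-coordinate theorem of \cite{tas2}, \cite{tas} together with the detailed construction in \cite{hy1}, not through the Lie-derivative shortcut you describe.
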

\begin{proof}
By Lemma~\ref{lem51} , we can choose a local coordinate system $(u^i)$ in a neighborhood of any hypersurface $f^{-1}(c)$, $0<c<a$  such that
\be
\bar g = (du^1 )^2 + |\n f|^2 \eta_{ij}(u^2, \cdots, u^n) du^i \otimes du^j, \label{eqn2020-1-17}
\ee
where $du^1 = \frac{df}{|\n f|}$ and
 the functions $\eta_{ij}$ depend only on $u^2, \cdots, u^n$ (cf. \cite{tas2} or \cite{tas}).
 Comparing this to (\ref{eq51}), we have
 $$
 g' = \eta_{ij}(u^2, \cdots, u^n) du^i \otimes du^j.
 $$
 These show that level hypersurfaces are homothetic to each other and to $f^{-1}(c)$ 
with $\eta_{ij}(u^2, \cdots, u^n) du^i \otimes du^j$ as metric form.
\end{proof}

 \begin{thm}\label{thm111} 
Let $(M, g,f, \ka)$ be a nontrivial $V$-static space with $\partial M = f^{-1}(0)$, $f>0$ on $M$ 
and $\kappa \ge 0$.  Suppose that $(M, g)$ has PIC.
Then we have the following.
\begin{itemize}
\item[(1)] Assume that $\kappa = 0$.
\begin{itemize}
\item[(i)] If  $\partial M$ is connected, then $M^n$ is isometric to ${\Bbb S}^n_+$.
\item[(i)] If $\partial M$ is disconnected, then $\partial M$ has only two components, and
 $M$ is isometric to the product $I \times {\Bbb S}^{n-1}$ of an interval with a standard sphere, 
up to finite cover. 
\end{itemize}
\item[(2)] If $\kappa >0$, then  $M$ is isometric to a geodesic ball in ${\Bbb S}^n$.
\end{itemize}
\end{thm}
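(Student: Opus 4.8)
The plan is to reduce the entire theorem to the single claim that the $3$-tensor $T$ vanishes identically on $M$: once $T=0$, Lemma~\ref{lem23} gives that $(M,g)$ has harmonic curvature, and the rigidity of the resulting rotationally symmetric structure will force the stated models. Before touching $T$, I note that PIC forces the scalar curvature to be positive, so Theorem~\ref{thm34} applies and yields $\omega=0$. Hence (\ref{eqn2019-5-27-2}) holds and $i_{\nabla f}z=\alpha\,df$, which activates every structural result of Sections~4 and~5.

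For $\kappa=0$ there is nothing more to do: with $\omega=0$ and positive scalar curvature in hand, Theorem~\ref{str001} already produces conclusion~(1), giving $M\cong\mathbb{S}^n_+$ when $\partial M$ is connected and $M\cong I\times\mathbb{S}^{n-1}$ up to finite cover when it is disconnected. So the real content is the case $\kappa>0$. Here Theorem~\ref{str02} collapses the maximum set $f^{-1}(a)$ to a single point, shows that $\partial M$ is connected, and makes every level set a topological sphere; combined with Lemmas~\ref{lem51} and~\ref{lem2020-6-30-1}, this realizes $M$ as a rotationally symmetric warped product $dr^2+\phi(r)^2 g'$ with mutually homothetic slices.

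The crux is to prove $T=0$ in this case. Feeding $i_Nz=\alpha N$ into the norm identity (\ref{eqn2021-3-8-2}) gives
\[
|T|^2=\frac{2}{(n-2)^2}\,|\nabla f|^2\Big(|\hat z|^2-\tfrac{1}{n-1}\alpha^2\Big),
\]
where $\hat z$ is the restriction of $z$ to a slice and satisfies $\operatorname{tr}\hat z=-\alpha$. By Cauchy--Schwarz $|\hat z|^2\ge \alpha^2/(n-1)$, with equality precisely when $\hat z=-\tfrac{\alpha}{n-1}g'$; thus $T=0$ is equivalent to the slices being Einstein. I would obtain this from the homothety of the slices together with their collapse onto the single maximum point: after rescaling, the shrinking level sets converge to a round sphere, and since Lemma~\ref{lem2020-6-30-1} declares them all homothetic to the fixed model $g'$, the model $g'$ must itself be the round $\mathbb{S}^{n-1}$. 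Being round, the slices are Einstein, $\hat z$ is pure trace, and $T\equiv 0$ (first on $\{|\nabla f|\ne 0\}$, then everywhere since $T$ carries a factor of $df$).

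Finally, $T=0$ and Lemma~\ref{lem23} give harmonic curvature. Coupled with the warped-product form and the V-static equation (\ref{cem2}), the warping function $\phi$ then satisfies a second-order ODE whose solution is pinned down by the boundary data ($f=0$ and $|\nabla f|$ constant on $\partial M$) and by $z(\nu,\nu)=-(\kappa/a+s/n)<0$ from (\ref{eq43}); the only admissible profile is the trigonometric one, identifying $M$ with a geodesic ball in $\mathbb{S}^n$. The step I expect to be hardest is the proof of $T=0$---namely upgrading ``homothetic spherical slices'' to genuinely round Einstein slices near the collapsing maximum point---after which the ODE identification of the warped product as a geodesic ball, rather than some other rotationally symmetric model, is the remaining delicate point.
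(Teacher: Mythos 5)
Your skeleton agrees with the paper's up to the crux: PIC gives $\omega=0$ (Theorem~\ref{thm2021-3-17-2}) and positive scalar curvature, the case $\kappa=0$ is disposed of exactly as you say by Theorem~\ref{str001}, and for $\kappa>0$ everything reduces, via Lemma~\ref{lem23}, to showing $T=0$. Your reformulation of $T=0$ through (\ref{eqn2021-3-8-2}) as ``the restriction $\hat z$ of $z$ to the slices is pure trace'' is correct. Where you part ways with the paper is in how this is proved. The paper never takes a limit at the maximum point: it computes $\tfrac12\mathcal{L}_{\nabla f}g$ twice on $M\setminus f^{-1}(a)$ --- once from the warped-product form $g=\frac{df}{|\nabla f|}\otimes\frac{df}{|\nabla f|}+b(f)^2g'$, once from (\ref{cem2}) together with $i_{\nabla f}z=\alpha\,df$ --- equates the two, and takes the trace along a slice (equation (\ref{eq52})); this yields $z(E_i,E_i)=-\frac{\alpha}{n-1}$ on every level set directly, hence $T=0$, with no blow-up analysis. (The paper then cites \cite{b-b-b} rather than running your warping-function ODE, but that difference is cosmetic.)

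Your blow-up argument, as written, has a genuine gap at precisely the step you flag as hardest. The assertion ``after rescaling, the shrinking level sets converge to a round sphere'' is not true for level sets of a general function near a nondegenerate maximum: those converge, after rescaling, to the Hessian ellipsoid $\{-Ddf_p(v,v)=1\}\subset T_pM$, which is round exactly when $Ddf_p$ is proportional to $g_p$, i.e.\ exactly when $z_p=0$ --- so, stated as a property of level sets of $f$, your claim is essentially equivalent to what you are trying to prove. What rescues it is the warped-product structure itself: the radial curves of $dr^2+b^2g'$ are unit-speed geodesics terminating at $p$ and $r$ is $1$-Lipschitz, so the level sets of $f$ are \emph{metric spheres} about $p$; smoothness of $g$ at the single point $p$ then forces the rescaled geodesic spheres to converge, under the exponential-map identification, to the round sphere, and the homothety from Lemma~\ref{lem2020-6-30-1} transfers roundness to $g'$. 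None of this justification appears in your write-up, and it is the entire content of the step. Two smaller items: passing from ``slices intrinsically round'' to ``$\hat z$ pure trace'' requires the warped-product Ricci formula (routine, but it should be said, since $\hat z$ is ambient, not intrinsic); and your appeal to (\ref{eq43}) as boundary data in the final ODE step is illegitimate for $\kappa>0$, because that identity was derived under the hypothesis that $f^{-1}(a)$ is a hypersurface, which Theorem~\ref{str02} rules out --- fortunately the ODE identification needs only smoothness at the center ($b(0)=0$, $b'(0)=1$) and constant $s>0$, or one can simply quote \cite{b-b-b} as the paper does.
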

\begin{proof}
Since $(M, g)$ has PIC, we have $\o:=df \wedge i_{\n f}z = 0$ by Theorem~\ref{thm2021-3-17-2}
and $(M, g)$ has a positive constant scalar curvature. In case $\kappa = 0$, the conclusion follows from Theorem~\ref{str001} directly.
 
Now, assume that $\kappa > 0$ so that $\partial M$ is connected and the maximum set $f^{-1}(a)$ with $a = \max_M f$ is a single point. In particular, every level set
$f^{-1}(t), 0< t< a$, is homeomorphic to a sphere ${\Bbb S}^{n-1}$ by Theorem~\ref{str02}.

By Lemma~\ref{lem2020-6-30-1},  the given metric $g$  can be written as 
$$ 
g= \frac {df}{|\nabla f|} \otimes \frac {df}{|\nabla f|} +b(f)^2g',
$$
where $b(f)>0$ is a positive function depending only on $f$ and $g'$ is a metric restricted to a hypersurface $f^{-1}(c)$, $0 < c<a$. As in \cite{hy1}, we may obtain 
$$ 
\frac 12 {\mathcal L}_{\nabla f}g=N(|\nabla f|)\frac {df}{|\nabla f|}\otimes\frac {df}{|\nabla f|} +
b|\nabla f|^2 \frac {db}{df}g'
$$
and 
\bea
 \frac 12 {\mathcal L}_{\nabla f}g &=& Ddf=fz-\frac {n\kappa +sf}{n(n-1)}g\\
 &=& N(|\nabla f|)\frac {df}{|\nabla f|}\otimes\frac {df}{|\nabla f|} +fz -f\alpha \frac {df}{|\nabla f|}\otimes\frac {df}{|\nabla f|}-\frac {n\kappa +sf}{n(n-1)}b^2 g'.
 \eea
By comparing these two equations, we have
\be 
 \left( b|\nabla f|^2 \frac{db}{df}+\frac {n\kappa+sf}{n(n-1)}b^2\right) g'
=f\left( z-\a \frac {df}{|\nabla f|}\otimes\frac {df}{|\nabla f|}\right). \label{eq52}
\ee
Let $\{E_i\}_{i=1}^n$ be a local frame with $E_n=N$. Then, by taking the trace of (\ref{eq52}) on
 $f^{-1}(c)$, 
$$   
b|\nabla f|^2 \frac {db}{df}+\frac {n\kappa +sf}{n(n-1)}b^2  = -\frac{f\a}{n-1},
$$
which implies that 
$$ 
-\frac {\a}{n-1} g' = z-\alpha \frac {df}{|\nabla f|}\otimes\frac {df}{|\nabla f|}.
$$
In particular, on the level hypersurface $f^{-1}(c)$, we have
$$
z(E_i, E_i)=-\frac {\a}{n-1}
$$
for $1\leq i\leq n-1$, and so $T=0$ on $f^{-1}(c)$ by (\ref{eqn2021-3-8-2}). 
The same argument holds for $f^{-1}(t)$ 
instead of $f^{-1}(c)$. Thus we have $T= 0$ and so $C = 0$ on the whole $M$.
Since the scalar curvature is constant, $C=0$ implies $(M, g)$ has harmonic curvature, i.e.,
${\rm div}R = 0$ for the Riemannian curvature tensor $R$ on $(M, g)$. Hence the conclusion follows from the main result in \cite{b-b-b} since $\partial M$ is connected and $(M, g)$ has PIC.
We would like to mention that since $M$ is contractible by Theorem~\ref{str02}, 
the case that a warped product $I \times {\Bbb S}^{n-1}$ with a covering group ${\Bbb Z}_2$
does not happen.

\end{proof}

\end{document}